\newcommand{\B}[1]{\mathbb{#1}}
\newtheorem{theorem}[subsection]{Theorem}
\newtheorem{lemma}[subsection]{Lemma}
\newtheorem{proposition}[subsection]{Proposition}
\newtheorem{conjecture}[subsection]{Conjecture}
\theoremstyle{definition}
\newtheorem{example}[subsection]{Example}
\newtheorem{notation}[subsection]{Notation}
\theoremstyle{remark}
\newtheorem{remark}[subsection]{Remark}
\numberwithin{figure}{section}
\numberwithin{table}{section}
\numberwithin{equation}{section}
\def\PGL{\operatorname{PGL}}
\def\SL{\operatorname{SL}}
\def\SO{\operatorname{SO}}
\newcommand{\OP}{\operatorname}
\def\rank{\OP{rank}}
\subjclass[2010]{Primary 20G15; Secondary 20B07, 22E15}
\keywords{semisimple algebraic group; semisimple Lie group; uniform boundedness.}
\date{February 28, 2022}
\begin{document}

\title{Uniform boundedness for algebraic groups and Lie groups}
\author{Jarek K\k{e}dra}
\address{University of Aberdeen and University of Szczecin}
\email{kedra@abdn.ac.uk}
\author{Assaf Libman}
\address{University of Aberdeen}
\email{a.libman@abdn.ac.uk}
\author{Ben Martin}
\address{University of Aberdeen}
\email{b.martin@abdn.ac.uk}

\begin{abstract}
 Let $G$ be a semisimple linear algebraic group over a field $k$ and let $G^+(k)$ be the subgroup generated by the subgroups $R_u(Q)(k)$, where $Q$ ranges over all the minimal $k$-parabolic subgroups $Q$ of $G$.  We prove that if $G^+(k)$ is bounded then it is uniformly bounded.  Under extra assumptions we get explicit bounds for $\Delta(G^+(k))$: we prove that if $k$ is algebraically closed then $\Delta(G^+(k))\leq 4 \rank(G)$, and if $G$ is split over $k$ then $\Delta(G^+(k))\leq 28 \rank(G)$.  We deduce some analogous results for real and complex semisimple Lie groups.
\end{abstract}

\maketitle


\section{Introduction}

In this paper we investigate the boundedness behaviour of a semisimple linear algebraic group $G$ over an infinite field $k$.  (For definitions of boundedness and related notions, see Section~\ref{sec:bddness}.)  If $k= {\mathbb R}$ then $G$ is a semisimple Lie group, and it is well known that $G$ is compact in the real topology if and only if it is anisotropic.  The authors showed in \cite[Thm.\ 1.2]{KLM1} that if $G$ is compact then $G$ is bounded but is not uniformly bounded; on the other hand, if $G$ has no simple compact factors then $G$ is uniformly bounded.  Motivated by this, we make the following conjecture.

\begin{conjecture}
\label{conj:main}
 Let $G$ be a semisimple linear algebraic group over an infinite field $k$.
 Then $G^+(k)$ is uniformly bounded.
\end{conjecture}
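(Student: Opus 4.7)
The plan is to prove the conjecture by reducing it through the structure theory of $k$-isotropic semisimple groups to a uniform bound on each of finitely many rank-one building blocks, with constants depending only on the type (not the field).

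First I would reduce to the case where $G$ is almost $k$-simple and $k$-isotropic. Writing $G$ as an almost direct product of its $k$-simple factors (or of Weil restrictions of absolutely simple groups), the subgroup $G^+(k)$ respects this decomposition, so it suffices to treat each factor. If $G$ is $k$-anisotropic there are no proper minimal $k$-parabolics and $G^+(k)$ is trivial, so we may assume $k$-$\rank(G)\geq 1$. Fix a maximal $k$-split torus $S$, a minimal $k$-parabolic $P\supset Z_G(S)$, and its opposite $P^-$; let $U = R_u(P)$ and $U^-=R_u(P^-)$. Since any two minimal $k$-parabolics are $G(k)$-conjugate, any element of $G^+(k)$ is a product of conjugates of elements of $U(k)$ and of $U^-(k)$, and these two generating sets are themselves conjugate under a representative of the long element of the relative Weyl group, which lies in $G^+(k)$ by standard Bruhat-cell arguments. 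Thus it suffices to bound the word-length in $G^+(k)$ of elements of $U(k)$ with respect to a fixed finite symmetric generating set.

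Next I would filter $U$ by the descending central series, using the partition of positive relative roots into levels. The successive quotients are $k$-vector spaces (in general twisted forms of affine spaces on which $Z_G(S)$ acts by characters), so uniform boundedness on $U(k)$ follows once one can show that: (i) for each simple relative root $\alpha$, the rank-one subgroup $H_\alpha = \la U_\alpha,U_{-\alpha}\ra$ satisfies $H_\alpha(k)^+$ uniformly bounded, and (ii) the action of a suitably chosen element of $H_\alpha(k)$ (playing the role of the standard torus element $\OP{diag}(t,t^{-1})$ in $\OP{SL}_2$) lets one absorb scalar multiplication on $U_\alpha(k)$, thereby collapsing the infinite-dimensional affine group $U_\alpha(k)$ into a bounded number of conjugates of a single element. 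The Chevalley commutator formula (or its non-split analogue) then propagates this bound through the filtration.

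The main obstacle, and the place where the full conjecture is stronger than what is recorded in the abstract, is step~(i) for non-split rank-one groups. When $G$ is $k$-split each $H_\alpha$ is isogenous to $\OP{SL}_2$, and uniform boundedness of $\OP{SL}_2(k)^+$ is classical via the identity that expresses any elementary matrix as a short product of conjugates of a fixed one; this underlies the $28\rank(G)$ bound quoted in the abstract. In the non-split case $H_\alpha$ can be of type $^2\!A_n$, $^2\!D_n$, $^3\!D_4$, $^6\!D_4$, $^2\!E_6$ etc., and $R_u(P\cap H_\alpha)$ is Heisenberg-like of nilpotency class two with center a norm-one group of a division algebra. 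One must show uniform boundedness of this \emph{relative} root-group structure with constants independent of the underlying division algebra, using descent and the action of the norm-one torus $Z_G(S)/Z(H_\alpha)$.

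Finally, having bounded $U(k)$, one reassembles: each element of $G^+(k)$ lies in the product $U(k)\cdot w\cdot U(k)$ for some $w$ in a finite set of Weyl-group representatives inside $G^+(k)$ via Bruhat. The expected bound is of order $c\cdot \rank_k(G)$ for an absolute constant $c$, matching the shape of the explicit bounds in the abstract. The key difficulty, beyond the non-split rank-one analysis, is that Galois descent does not preserve commutator length cheaply: a bound proved over $\bar k$ via the already-known $4\rank(G)$ estimate need not descend to $k$ with uniform constants, so one has to work intrinsically with relative (Tits) root data rather than passing to the absolute root system.
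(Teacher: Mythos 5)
This statement is explicitly presented in the paper as an \emph{open conjecture}, not a theorem: the authors write ``We make some steps towards proving the conjecture'' and then prove only partial results (Theorems~\ref{thm:bdd_unifbdd}--\ref{thm:complex_Lie}). Your proposal is therefore not being compared against a proof in the paper but against the absence of one, and the honest reading of your own write-up confirms that it does not close the gap either. You correctly identify where the difficulty sits---uniform boundedness of the relative rank-one pieces $H_\alpha(k)^+$ when $H_\alpha$ is a non-split form (types ${}^2A_n$, ${}^2D_n$, ${}^3D_4$, ${}^6D_4$, ${}^2E_6$, etc.), with constants independent of the underlying division algebra---but you only \emph{state} that ``one must show'' this; you give no mechanism for doing so. That is precisely the obstruction that keeps the conjecture open, and no amount of filtration or Chevalley commutator bookkeeping around it will supply the missing rank-one input.

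Beyond this central gap, several of the reduction steps are shakier than presented. First, uniform boundedness requires a bound on $\|G^+(k)\|_S$ that holds for \emph{every} finite normal generating set $S$ simultaneously; your phrase ``with respect to a fixed finite symmetric generating set'' only addresses boundedness (or at most strong boundedness), and the hard part of the paper's partial results is precisely passing from a fixed generating set to an arbitrary one. The paper does this via Proposition~\ref{prop:gettingU}: starting from an arbitrary normal generating set $X\subseteq G^+(k)$, one uses the density estimate~(\ref{eqn:k_dense}) and the big Bruhat cell $Pn_0P$ to manufacture a regular semisimple element of $P(k)$ inside $B_X(4r)$, then conjugate it to sweep out all of $U(k)$ in two more steps (Lemma~\ref{lem:reg_ss}). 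Your plan has no analogue of this step; you implicitly assume a torus-like element is available to ``absorb scalar multiplication on $U_\alpha(k)$,'' but for an arbitrary normal generating set nothing of the kind is given. Second, your Bruhat reassembly ``each element of $G^+(k)$ lies in $U(k)\cdot w\cdot U(k)$'' is not correct in the non-split case: the relative Bruhat cells are $P(k)wP(k)$ with $P(k)=L(k)U(k)$, and the anisotropic Levi $L(k)$ does not disappear. Handling $L(k)$ is another genuine obstruction (even its intersection with $G^+(k)$ is subtle), and the paper only manages it in the split case, where $L=T$ and Lemma~\ref{lem:max_tor} explicitly writes torus elements as short products of root-group elements. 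Third, the claim that $\SL_2(k)^+$ is ``classically'' uniformly bounded via a single elementary-matrix identity is itself not routine---the bound for an arbitrary normal generating set requires the regular-semisimple trick above, which is how the paper obtains the $28\rank G$ constant.

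In short: your outline retraces, in rough shape, the reduction the authors already carry out (split case, Bruhat, rank-one pieces), but the two steps that actually make the argument work in the paper---finding a regular semisimple element in $B_X(O(r))$ for arbitrary $X$, and the explicit Gauss-type decomposition absorbing the Levi---are missing, and the non-split rank-one problem, which you correctly flag as the crux, is left entirely unresolved. As a proof of the conjecture the proposal fails; as a research plan it is a reasonable sketch of the known partial results plus an accurate statement of what remains open.
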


\noindent Here $G^+(k)$ denotes the subgroup of $G(k)$ generated by the subgroups $R_u(Q)(k)$, where $Q$ ranges over the minimal $k$-parabolic subgroups of $G$.  If $k= \overline{k}$ then $G^+(k)= G(k)$, while if $G$ is anisotropic over $k$ then $G^+(k)= 1$.  If $G$ has no anisotropic $k$-simple factors then $G^+(k)$ is dense in $G$.  Note that a finite group is clearly uniformly bounded, so Conjecture~\ref{conj:main} and the other results below all hold trivially for a semisimple linear algebraic group over a finite field $k$.

We make some steps towards proving the conjecture.

\begin{theorem}
\label{thm:bdd_unifbdd}
 Let $G$ be a semisimple linear algebraic group over an infinite field $k$, and suppose $G(k)= G^+(k)$.  Then $G(k)$ is finitely normally generated.   Moreover, if $G(k)$ is bounded then $G(k)$ is uniformly bounded.
\end{theorem}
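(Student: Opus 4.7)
The plan is to prove both assertions using structural facts about semisimple algebraic groups, with finite normal generation supplying an anchor for the uniform-boundedness argument. Set $\Gamma = G(k) = G^+(k)$.

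\textbf{Finite normal generation.} Fix a maximal $k$-split torus $T \subseteq G$ and a minimal $k$-parabolic $P \supseteq T$ with unipotent radical $U = R_u(P)$. Since any two minimal $k$-parabolic subgroups are $\Gamma$-conjugate, $\Gamma$ equals the normal closure of $U(k)$ in $\Gamma$. Let $\alpha_1,\dots,\alpha_r$ be a system of simple roots in the relative root system of $(G,T)$; by the Chevalley commutator formula, $U(k)$ is generated by the relative root subgroups $U_{\alpha_i}(k)$. For each $\alpha_i$, conjugation by $T(k)$ scales $U_{\alpha_i}(k)$ via the non-trivial character $\alpha_i$ (at least on the abelian quotient, in the non-split case), so a finite subset $F_i \subseteq U_{\alpha_i}(k)$ suffices to generate $U_{\alpha_i}(k)$ under $T(k)$-conjugation. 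Taking $S_0 = \bigcup_i F_i$ produces a finite subset of $\Gamma$ whose $\Gamma$-normal closure is $\Gamma$.

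\textbf{Bounded implies uniformly bounded.} Assume $\Gamma$ is bounded, so $q_{S_0}$ is bounded on $\Gamma$ by some $C_0$. For any finite normally generating $S \subseteq \Gamma$, sub-multiplicativity gives
\[
q_S(g) \;\leq\; q_{S_0}(g)\cdot \max_{s_0 \in S_0} q_S(s_0) \;\leq\; C_0\cdot \max_{s_0 \in S_0} q_S(s_0),
\]
so it suffices to bound each $q_S(s_0)$ by a constant independent of $S$. Two inputs achieve this. First, $\Gamma$ is perfect (being generated by unipotents), and Tits' simplicity theorem forces every proper normal subgroup of $\Gamma$ into the centre $Z(\Gamma)$; together with perfection, this implies that the normal closure of any non-central element is all of $\Gamma$. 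Hence any normally generating $S$ contains a non-central element $s$, and so $q_S(s_0) \leq q_{\{s\}}(s_0)$. Second, applying the boundedness hypothesis to the conjugation-invariant norm $\tau(g) := q_{\{s_0\}}(g)$ -- a genuine norm because $s_0$ normally generates $\Gamma$ -- yields a $D$ such that every $g \in \Gamma$ is a product of at most $D$ conjugates of $s_0^{\pm 1}$. Using explicit Chevalley commutator identities in the root subgroups together with the $T(k)$-action, the resulting expression for $s$ in terms of $s_0$-conjugates can be inverted to realize $s_0$ as a bounded product of $s^{\pm 1}$-conjugates, giving a uniform bound $q_{\{s\}}(s_0) \leq K$.

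\textbf{Main obstacle.} The delicate step is the inversion in the last paragraph: converting a short expression of an arbitrary non-central $s$ in terms of $s_0$-conjugates into a short expression of $s_0$ in terms of $s$-conjugates, uniformly in $s$. Abstract group theory alone is insufficient; one must exploit the algebraic-group structure, for instance by arranging that the commutator of any non-central element with a suitable test element lies in a root subgroup from which $s_0$ can be efficiently recovered via $T(k)$-conjugation. A secondary difficulty is the non-split case, where the relative root groups $U_{\alpha}(k)$ may be non-abelian and the $T(k)$-orbit argument of Part 1 requires refinement.
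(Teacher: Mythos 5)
Your proof sketches the right sub-multiplicativity framework but leaves both halves of the theorem with genuine gaps.

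\textbf{Part 1 (finite normal generation).} The $T(k)$-conjugation argument does not work for all infinite fields. Take $G=\SL_2$ and $k=\B F_2(x_1,x_2,\dots)$ with infinitely many indeterminates. Here $T(k)=k^*$, $U_\alpha(k)\cong k^+$, and $t\in T(k)$ acts on $U_\alpha(k)$ by multiplication by $\alpha(t)=t^2$, so $\alpha(T(k))=(k^*)^2$. Consequently the $\B Z$-span of the $T(k)$-conjugates of a finite set $F\subseteq U_\alpha(k)$ is contained in the $k^2$-subspace of $k$ spanned by $F$; since $[k:k^2]$ is infinite, no finite $F$ can work. (In characteristic $\neq 2$ the identity $a=((a+1)/2)^2-((a-1)/2)^2$ rescues the argument, and in characteristic $2$ with $[k:k^2]<\infty$ one can enlarge $F$, but the general infinite field is not covered.) The paper avoids this entirely: Lemma~\ref{lem:reg_ss_par} produces a regular semisimple $t\in P(k)$, and Lemma~\ref{lem:reg_ss} then gives $U(k)\subseteq B_t(2)$ via the fact that $u\mapsto[u,t]$ is a $k$-isomorphism of $U$. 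Since $G(k)=G^+(k)$ is the normal closure of $U(k)$, the singleton $\{t\}$ normally generates. This works uniformly in $k$.

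\textbf{Part 2 (bounded implies uniformly bounded).} You correctly reduce, via sub-multiplicativity, to bounding $q_{\{s\}}(s_0)$ uniformly over all non-central $s$, and you correctly flag this ``inversion'' as the crux. But that is the theorem: without it there is no proof, and the ``explicit Chevalley commutator identities'' suggestion is not substantiated and would face the same $T(k)$-scaling obstruction as Part 1. The paper's Proposition~\ref{prop:gettingU} is exactly the missing uniform bound: for any finite $X\subseteq G^+(k)$ that normally generates $G$ as an algebraic group, $U(k)\subseteq B_X(8r)$. Its proof is not a word-by-word inversion; it uses (i) Eqn.~(\ref{eqn:k_dense}), which says $B_X(2r)$ meets every nonempty Zariski-open subset of $G$ (a consequence of Proposition~\ref{prop:dense} and Zariski density of $G^+(k)$), (ii) the openness of the Bruhat big cell $Pn_0P$ to locate elements $xn_0$ and $n_0x_1$ of $B_X(2r)$ whose product $t=xn_0^2x_1\in B_X(4r)\cap P(k)$ is regular semisimple, and (iii) Lemma~\ref{lem:reg_ss} to get $U(k)\subseteq B_t(2)\subseteq B_X(8r)$. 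Once this is in hand, the paper finishes by fixing a reference generating set $Y$ with $G(k)=B_Y(s)$ and $Y\subseteq B_{U(k)}(d)$, so $G(k)\subseteq B_{U(k)}(sd)\subseteq B_X(8rsd)$. A secondary issue: your appeal to Tits simplicity applies to isotropic, $k$-simple, simply connected groups; for general semisimple $G$ with $G(k)=G^+(k)$ one must argue factor by factor, and the paper sidesteps this by working with $U(k)$ directly rather than single non-central elements.
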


We want to give explicit bounds for $\Delta(G)$ in terms of Lie-theoretic quantities such as $\rank G$ and $\dim G$.  We can do this in some special cases.  The first improves the bound $4\dim G$ from \cite[Thm.~4.3]{KLM1}.

\begin{theorem}
\label{thm:algclosed}
 Let $G$ be a semisimple linear algebraic group over an algebraically closed field $k$.  Then $\Delta(G(k))\leq 4\rank G$.  
\end{theorem}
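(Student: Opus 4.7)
The plan is to strengthen the argument from \cite{KLM1}, which produces the weaker bound $4\dim G$, by counting the $\rank G$ simple roots in place of all the root-space coordinates.

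First I would reduce to the case where $G$ is simple.  Writing $G$, up to central isogeny, as a direct product of simple factors $G_1,\dots,G_s$, one has $\rank G=\sum_i \rank G_i$, and an element of $G(k)$ factors accordingly; conjugation length is then additive in the factors, so it suffices to bound $\Delta(G_i(k))$ by $4\rank G_i$.

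Now assume $G$ is simple.  Fix a maximal torus $T$, a Borel subgroup $B=TU$, and simple roots $\alpha_1,\dots,\alpha_r$ with $r=\rank G$.  Since $k$ is algebraically closed, $T(k)$ acts transitively on $U_\beta(k)\setminus\{1\}$ for each root $\beta$, and the Weyl group permutes root subgroups transitively within each root-length orbit.  Hence, after choosing one nontrivial root-subgroup element per root length (one generator in the simply-laced case, two otherwise), every nontrivial element of every $U_\beta(k)$ is a single $G(k)$-conjugate of the chosen generator.  The main claim is then that every $g\in G(k)$ can be written as a product of at most $4r$ root-subgroup elements.

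To prove this I would proceed as follows:
\begin{enumerate}[label=(\roman*)]
\item Apply the Jordan decomposition $g=g_s g_u$ with $g_s$ semisimple, $g_u$ unipotent, and commuting; conjugate so that $g_s\in T(k)$ and $g_u\in U(k)$.
\item Use the coroot isomorphism $T(k)\cong (k^\times)^r$ to write $g_s=\prod_{i=1}^{r}\alpha_i^\vee(c_i)$.
\item Express each $\alpha_i^\vee(c_i)$ as a product of at most four root-subgroup elements inside the $\SL_2$-subgroup $L_i=\langle U_{\alpha_i},U_{-\alpha_i}\rangle$, via the Steinberg/Bruhat factorization $\alpha_i^\vee(c_i)=n_{\alpha_i}(c_i)n_{\alpha_i}(1)^{-1}$ with $n_{\alpha_i}(c)=u_{\alpha_i}(c)u_{-\alpha_i}(-c^{-1})u_{\alpha_i}(c)$, collapsing the adjacent $U_{\alpha_i}$-factors.
\item Absorb the unipotent remainder $g_u\in U(k)$ into the existing factors by commuting it past the coroot terms, using that $T$ normalizes each $U_\beta$ and that each commutator $[U_{\alpha_i},U_\beta]$ lies in a controlled higher-height subgroup of $U$.
\end{enumerate}

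The main obstacle, I expect, is step~(iv).  A naive implementation would add $|\Phi^+|$ extra factors and simply recover the $4\dim G$ bound of \cite{KLM1}.  To achieve the linear-in-rank bound $4\rank G$ one must order the Steinberg factorizations of the $\alpha_i^\vee(c_i)$ carefully, and sweep $g_u$ through them so that the resulting commutators telescope into factors already counted, rather than producing new ones.  Managing this bookkeeping---plausibly by an induction on the height of roots in $\Phi^+$, or by choosing the generator inside each $\alpha_i^\vee(\cdot)$ adaptively to kill the unipotent contributions one simple root at a time---will be the technical heart of the proof.  The non-simply-laced case contributes only a constant overhead from the second root-length orbit and does not affect the asymptotic bound.
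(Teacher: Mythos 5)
Your proposal has a fundamental quantifier error that makes the whole approach fail for this theorem.  Recall that
\[
\Delta(H) = \sup\{\,\|H\|_S : S \text{ normally generates } H,\ |S|<\infty\,\},
\]
so proving $\Delta(G(k))\leq 4r$ means showing that \emph{every} finite normal generating set $S$ satisfies $\|G(k)\|_S\leq 4r$.  Your argument fixes one particularly convenient $S$ (one or two root-subgroup elements), then tries to show that every element of $G(k)$ is a product of $\leq 4r$ conjugates of those specific generators.  That would at best bound $\Delta_1$ or $\Delta_2$, not $\Delta$.  An adversary is allowed to hand you an arbitrary finite normal generating set $X$---say, consisting of a single badly-behaved element far from any root subgroup---and you must bound $\|G(k)\|_X$.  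Nothing in your sketch addresses how to get from such an $X$ to a dense or generating collection in a controlled number of steps.  The paper's entire Proposition~\ref{prop:dense} exists precisely to solve this problem: starting from an arbitrary normal generating set, it passes to the commutator sets $D_i=[C_i,G]$, pushes them through the adjoint-quotient $G\to G/{\rm Inn}(G)\cong T/W$ to show they meet $T$ in a positive-dimensional set, and then uses the absolute irreducibility of the Weyl group action on $X(T)\otimes\mathbb{R}$ to conclude that $r$ Weyl-translates of that set have dense product in $T$; squaring the resulting open dense subset of $G$ gives the factor $4$ and the bound $4r$.  None of that machinery is about choosing nice generators---it is about coping with arbitrary ones.

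There is also a secondary issue you flag yourself: step~(iv), absorbing the unipotent part $g_u\in U(k)$ without inflating the count to $O(\dim G)$.  You correctly identify this as the bottleneck but leave it as a hope rather than an argument; the telescoping/induction-on-height scheme you gesture at is not spelled out and it is not clear it can work, since $g_u$ can involve components in all $|\Phi^+|$ positive root groups.  But even if you could make step~(iv) work, the quantifier error above means you would have proved a much weaker statement than the theorem claims.
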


\begin{theorem}
\label{thm:split}
 Let $G$ be a split semisimple linear algebraic group over an infinite field $k$.  Then $\Delta(G^+(k))\leq 28\rank G$.  
\end{theorem}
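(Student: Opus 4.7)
The plan is to reduce to a split simply connected $k$-simple $G$ and then combine a linear-in-rank decomposition into rank-one subgroups with a uniform bound for split $\SL_2(k)$.

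First, I would reduce to the case that $G$ is simply connected and $k$-simple. For a direct product of split $k$-simple factors $G = G_1 \times \cdots \times G_m$, a finite normal generating set for $G^+(k)$ is the union of such sets for the $G_i^+(k)$ and $\Delta$ is additive across the factors, matching $\rank G = \sum_i \rank G_i$; and the simply connected cover is a central $k$-isogeny whose effect on $\Delta(G^+(k))$ can be absorbed. So I assume $G$ is split, simply connected, $k$-simple of rank $r$, with a fixed split maximal $k$-torus $T$, Borel $B = TU^+$, and Steinberg root subgroup generators $x_\alpha(t)$; here $G^+(k) = G(k)$. Because $T$ is split, for every root $\alpha$ the restriction $\alpha\colon T(k) \to k^\times$ is surjective, so $T(k)$-conjugation scales $x_\alpha(t)$ through all of $U_\alpha(k) \setminus \{0\}$. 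Combined with Weyl-group conjugation between root subgroups, every nontrivial root subgroup element is a single $G(k)$-conjugate of a fixed nontrivial $u_0 \in U_{\alpha_0}(k)$, which therefore normally generates $G(k)$.

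The task then reduces to bounding, for arbitrary $g \in G(k)$, the number of root subgroup elements needed to express $g$ as a product. I would approach this in two stages. Stage one: show that every $g$ is a product of at most $cr$ elements, each lying in some rank-one subgroup $L_{\beta} = \langle U_\beta, U_{-\beta} \rangle$, which is isomorphic to $\SL_2$ or $\PSL_2$ over $k$, for some small explicit constant $c$. Stage two: invoke a split rank-one bound (from \cite{KLM1} or an explicit Gauss-Steinberg identity), namely that $\Delta(\SL_2(k))$ for infinite $k$ is bounded by some absolute constant $c'$. The product $c \cdot c' = 28$ then yields $\Delta(G^+(k)) \leq 28\rank G$.

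The main obstacle is stage one: achieving a decomposition into rank-one factors whose length is linear in $r$, not in $|\Phi^+|$ (which is quadratic in $r$ for classical types). A naive Bruhat decomposition produces only $O(|\Phi^+|)$ factors. A successful argument likely requires a bounded elementary generation result for split Chevalley groups, in the spirit of Tavgen, Vavilov, Smolensky, or the authors' own Theorem~\ref{thm:algclosed}, using the split torus action together with Chevalley commutator formulas to collapse products of arbitrary root subgroup elements into a linear number of rank-one subgroup factors. The precise values of $c$ and $c'$ with $cc' = 28$ are presumably what the authors compute carefully, and keeping the ``rank-one'' and ``split over $k$'' pieces of the argument disentangled is the bookkeeping aspect I expect to need the most care.
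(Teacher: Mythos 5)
Your plan leaves its ``Stage One'' unresolved, and that is in fact not the route the paper takes: the linear-in-$r$ factor does \emph{not} come from decomposing an arbitrary $g\in G(k)$ into $O(r)$ rank-one pieces.  The paper instead proves a \emph{constant} bound $G(k)\subseteq B_{U(k)}(7)$ (Proposition~\ref{prop:RuB}).  One first writes $G(k)=(U^-B)^{-1}(k)\,(U^-B)(k)$ using Lemma~\ref{lem:open_prod}, which expresses $g$ as a product of factors in $U(k)$, $T(k)$, $U^-(k)$; then Lemma~\ref{lem:max_tor} packages an arbitrary element of $T(k)=T_{\alpha_1}(k)\cdots T_{\alpha_r}(k)$ as a product of just four ``blocks'' $x_r\cdots x_1$, $u_r\cdots u_1$, $v_1\cdots v_r$, $w_1\cdots w_r$, each block lying entirely in $U(k)$ or $U^-(k)$ (this uses that $U_{\alpha_i}$ commutes with $U_{-\alpha_j}$ for $i\neq j$), so the depth stays $7$ independently of $r$ or $|\Phi^+|$.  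The rank then enters from a completely different direction: via the density argument of Proposition~\ref{prop:dense} and Eqn.~(\ref{eqn:k_dense}) one finds a \emph{single} regular semisimple $t\in P(k)$ lying in $B_X(2r)$, and Lemma~\ref{lem:reg_ss} gives $U(k)\subseteq B_t(2)$ because $u\mapsto[u,t]$ is a $k$-isomorphism of $U$.  Chaining, $G(k)\subseteq B_{U(k)}(7)\subseteq B_t(14)\subseteq B_X(28r)$.

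Your ``Stage Two'' also has a circularity problem even granting Stage One: a bound on $\Delta(\SL_2(k))$ does not directly control the word norm of an element of $L_\beta(k)$ with respect to $G(k)$-conjugates of an arbitrary normal generating set $X$ of $G(k)$, since $X$ need not meet $L_\beta(k)$ at all; and a standalone bound for $\Delta(\SL_2(k))$ over an arbitrary infinite $k$ is itself the $r=1$ case of the theorem you are trying to prove.  (A smaller point: for non-simply-laced $G$ the long and short root subgroups are not $W$-conjugate, so a single $u_0$ does not normally generate by your argument, though this is easily patched.)  The one step of your plan that does match the paper is the reduction to simply connected $G$ via $\Delta(G^+(k))\leq\Delta(\widetilde G^+(k))$, from \cite[Lem.~2.16]{KLM1}; the paper does not further reduce to $k$-simple factors.
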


\noindent When $k= {\mathbb R}$, we get the following result.

\begin{theorem}
\label{thm:real_Lie}
 Let $H$ be a real semisimple linear algebraic group with no compact simple factors.  Then $H$ is uniformly bounded.  Moreover, if $H$ is split then $\Delta(H)\leq 28\rank G$.
\end{theorem}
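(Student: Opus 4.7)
My plan is as follows. The first assertion—that $H$ is uniformly bounded—is essentially \cite[Thm.~1.2]{KLM1}, recalled in the introduction; the new content is the explicit bound $\Delta(H)\leq 28\rank H$ in the split case, and I would deduce it from Theorem~\ref{thm:split}.

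The main step is to apply Theorem~\ref{thm:split} to $H$, viewed as a linear algebraic group over the infinite field $\mathbb{R}$: the split hypothesis is exactly what the theorem demands, so it yields
\[
\Delta\!\bigl(H^+(\mathbb{R})\bigr)\leq 28\rank H.
\]
It then remains to transfer this bound from $H^+(\mathbb{R})$ to the Lie group $H=H(\mathbb{R})$. Since $H$ is split it has no compact simple factors, and each one-parameter unipotent subgroup of $H(\mathbb{R})$ lies in the real points of the unipotent radical of some minimal $\mathbb{R}$-parabolic; as such subgroups generate the identity component in the real topology, we obtain $H^+(\mathbb{R})=H(\mathbb{R})^\circ$. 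To cover all of $H(\mathbb{R})$ I would appeal to the Kneser--Tits property for the split simply connected cover $\tilde H$—which forces $\tilde H(\mathbb{R})=\tilde H^+(\mathbb{R})$—and track the quotient through the central isogeny $\tilde H\to H$, noting that $H(\mathbb{R})/H^+(\mathbb{R})$ embeds into the finite group $H^1(\mathbb{R},\mu)$, where $\mu=\ker(\tilde H\to H)$.

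The main obstacle is precisely this last step. When $H$ is split but not simply connected, $H(\mathbb{R})/H^+(\mathbb{R})$ can be a nontrivial finite abelian $2$-group (as with $\PGL_2(\mathbb{R})/\PSL_2(\mathbb{R})$), and I must verify that absorbing its coset representatives into a normally generating set does not inflate conjugation-invariant word length past $28\rank H$: either by choosing representatives of bounded length in the ambient group, or by carrying out the argument on the simply connected cover and transferring the bound through the isogeny. Once this bookkeeping is settled, Theorem~\ref{thm:split} delivers the claimed inequality.
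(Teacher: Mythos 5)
Your plan correctly identifies Theorem~\ref{thm:split} as the source of the explicit bound, but it goes astray at the transfer step, and the reason is a misreading of what the object~$H$ in the statement actually is. Despite the phrasing ``real semisimple linear algebraic group,'' the proof in the paper (and the abstract) makes clear that $H$ is a connected linear semisimple Lie group. The paper invokes a theorem of Milne to produce a complex semisimple algebraic group $G$ defined over~$\mathbb{R}$ with $G^+(\mathbb{R})=H$; since $Z(H)$ is finite, $H$ is bounded and finitely normally generated by \cite[Thm.~1.2]{KLM1}, and Theorem~\ref{thm:bdd_unifbdd} upgrades this to uniform boundedness. When $H$ is split, $G$ is split over $\mathbb{R}$, and Theorem~\ref{thm:split} gives $\Delta(H)=\Delta(G^+(\mathbb{R}))\leq 28\rank G$ directly. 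There is no passage from $G^+(\mathbb{R})$ to $G(\mathbb{R})$ anywhere in the argument, because the theorem is a claim about $H=G^+(\mathbb{R})$ itself, not about the group of real points of some ambient algebraic group.

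This is precisely the gap you flagged in your own proposal, and your instinct that it is problematic is sound: for a split but non--simply-connected algebraic group such as $\PGL_2$, the group $\PGL_2(\mathbb{R})$ is disconnected with identity component $\PSL_2(\mathbb{R})=\PGL_2^+(\mathbb{R})$, and bounding $\Delta(\PGL_2(\mathbb{R}))$ by $28\rank$ is \emph{not} what Theorem~\ref{thm:split} delivers, nor what the present theorem claims. Absorbing coset representatives of $H(\mathbb{R})/H^+(\mathbb{R})$ into a normal generating set would genuinely disturb the constant, and the Galois-cohomology bookkeeping you sketch does not obviously resolve it. The paper sidesteps the issue entirely by never posing the question: the Lie group $H$ \emph{is} $G^+(\mathbb{R})$, so the bound of Theorem~\ref{thm:split} applies verbatim. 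Your first observation---that uniform boundedness per se already follows from \cite[Thm.~1.2]{KLM1} as recalled in the introduction---is correct; the paper nonetheless re-derives it via the new Theorem~\ref{thm:bdd_unifbdd}, presumably to illustrate the machinery developed here.
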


\noindent When $k= {\mathbb C}$, we get the following result.

\begin{theorem}
\label{thm:complex_Lie}
 Let $H$ be a complex semisimple linear algebraic group.  Then $H$ is uniformly bounded and $\Delta(H)\leq 4\rank G$.
\end{theorem}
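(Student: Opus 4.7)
The plan is to observe that Theorem~\ref{thm:complex_Lie} is essentially a restatement of Theorem~\ref{thm:algclosed} in Lie-theoretic language. By hypothesis, a complex semisimple linear algebraic group $H$ is the group $G(\mathbb{C})$ of $\mathbb{C}$-points of a semisimple linear algebraic $\mathbb{C}$-group $G$, so no translation between Lie groups and algebraic groups is required; the only thing to verify is that Theorem~\ref{thm:algclosed} applies directly.

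This is immediate. Since $\mathbb{C}$ is algebraically closed we have $G^+(\mathbb{C}) = G(\mathbb{C}) = H$, as already noted in the discussion following Conjecture~\ref{conj:main}. Theorem~\ref{thm:algclosed}, applied with $k = \mathbb{C}$, then yields $\Delta(H) \leq 4\rank G$, and in particular $H$ is uniformly bounded. Over $\mathbb{C}$ every maximal torus is split, so the algebraic rank $\rank G$ coincides with the Lie-theoretic rank of $H$ as a complex Lie group, and the bound is unambiguous.

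The ``main obstacle'' is essentially non-existent: once the identification $H = G(\mathbb{C})$ is in place, the statement follows in one line from Theorem~\ref{thm:algclosed}. The only piece of housekeeping is to confirm that the definition of $\Delta$ and of uniform boundedness used for Lie groups here agrees with the one used throughout the paper in the algebraic setting --- and it does, since both invariants are defined purely in terms of the underlying abstract group structure, with no reference to topology or smooth structure.
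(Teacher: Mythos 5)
Your proof takes the same route as the paper's: reduce immediately to Theorem~\ref{thm:algclosed} over $k=\mathbb{C}$. Under the statement as literally printed (``complex semisimple linear algebraic group''), your argument is complete and correct, and Theorem~\ref{thm:complex_Lie} is indeed a one-line corollary.

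However, the statement almost certainly contains a typo. The abstract advertises results about ``real and complex semisimple \emph{Lie groups}'', the section is headed ``Semisimple Lie groups'', the bound is phrased as $4\rank G$ with $G$ otherwise undefined in the statement, and the paper's own proof opens with ``if $H$ is a semisimple linear complex \emph{Lie group}''. Under that intended reading, the assertion ``no translation between Lie groups and algebraic groups is required'' no longer holds, and the content of the theorem is precisely that translation: one must invoke (as the paper does, citing \cite[Ch.~4, Sec.~2, Problem~12]{MR1064110}) the fact that every complex semisimple Lie group $H$ is the complex Lie group associated to some complex semisimple algebraic group $G$, with $H \cong G(\mathbb{C})$ as abstract groups, before Theorem~\ref{thm:algclosed} can be applied. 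That single step is what your proposal omits, and it is the only non-trivial content separating the Lie-group statement from a restatement of Theorem~\ref{thm:algclosed}. Everything else in your write-up --- the identification $G^+(\mathbb{C})=G(\mathbb{C})$, the agreement of ranks, the observation that $\Delta$ is a purely group-theoretic invariant --- is correct and matches the paper's reasoning.
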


The idea of the proofs is as follows.  First we prove Theorem~\ref{thm:algclosed} (Section~\ref{sec:algclosed}); the new ingredient is that we work in the quotient variety $G/{\rm Inn}(G)$ rather than in $G$, which allows us to improve on the bound in \cite[Thm.~4.3]{KLM1}.  A key result underpinning our theorems for non-algebraically closed $k$ is Proposition~\ref{prop:gettingU}.  We prove this in Section~\ref{sec:isotropic} and deduce Theorem~\ref{thm:bdd_unifbdd}.  When $G$ is split we obtain  Theorem~\ref{thm:split} from Proposition~\ref{prop:gettingU} and the Bruhat decomposition; see Section~\ref{sec:split}.
In Section~\ref{sec:Lie} we prove Theorems~\ref{thm:real_Lie} and \ref{thm:complex_Lie}.

\subsection*{Acknowledgements} This work was funded by Leverhulme Trust Research Project Grant RPG-2017-159.

\section{Boundedness and uniform boundedness}
\label{sec:bddness}

A conjugation-invariant norm on a group $H$ is a non-negative function $\| \ \| \colon H \to \B R$ such that $\| \ \|$ is constant on conjugacy classes, $\|g\|=0$ if and only if $g=1$ and $\|gh\| \leq \|g\|+\|h\|$ for all $g,h\in H$.
The diameter of $H$, denoted $\|H\|$, is $\sup_{g \in H} \| g\|$.
A group $H$ is called \emph{bounded} if every conjugation-invariant norm has finite diameter.
In \cite{KLM1} we introduced two stronger notions of boundedness.  We briefly recall them now.

A subset $S \subseteq H$ is said to \emph{normally generate} $H$ if the union of the conjugacy classes of its elements generates $H$.
Thus, every element of $H$ can be written as a word in the conjugates of the elements of $S$ and their inverses.
Given $g \in H$, the length of the shortest such word that is needed to express $g$ is the \emph{word norm} of $g$ denoted $\|g\|_S$.
It is a conjugation-invariant norm on $H$.
The \emph{diameter} of $H$ with respect to this word norm is denoted $\|H\|_S$.
For every $n \geq 0$ we define
\[
B_S^H(n) = \{ g \in H \,|\, \|g\|_S \leq n\},
\]
the ball of radius $n$ (of all elements that can be written as a product of $n$ or fewer conjugates of the elements of $S$ and their inverses).
When there is no danger of confusion we simply write $B_S(n)$ (cf.\ Notation~\ref{notn:ball}).

We will use the following result \cite[Lem.\ 2.3]{KLM1} repeatedly: if $X, Y\subseteq H$ and $Y\subseteq B_X(m)$ then $B_Y(n)\subseteq B_X(mn)$.

We say that $H$ is {\em finitely normally generated} if it admits a finite normally generating set.  In this case we define
\begin{eqnarray*}
&& \Delta_k(H) = \OP{\sup} \{ \|H\|_S : \text{$S$ normally generates $H$ and $|S|\leq k$} \} \\
&& \Delta(H) = \OP{\sup} \{ \|H\|_S : \text{$S$ normally generates $H$ and $|S|< \infty$} \}.
\end{eqnarray*}
A finitely normally generated group $H$ is called \emph{strongly bounded} if $\Delta_k(H)<\infty$ for all $k$.
It is called \emph{uniformly bounded} if $\Delta(H)<\infty$.
Notice that $\Delta_k(H) \leq \Delta(H)$ for all $k\in {\mathbb N}$, so uniform boundedness implies strong boundedness.
It follows from \cite[Corollary 2.9]{KLM1} that strong boundedness implies boundedness.

\section{Linear algebraic groups}
\label{sec:LAG}

We recall some material on linear algebraic groups; see \cite{MR1102012} and \cite{MR2458469} for further details.  Below $k$ denotes an infinite field and $G$ denotes a semisimple linear algebraic $k$-group; we write $r$ for $\rank G$.  We adopt the notation of \cite{MR1102012}: we regard $G$ as a linear algebraic group over the algebraic closure $\overline{k}$ together with a choice of $k$-structure.  We identify $G$ with its group of $\overline{k}$-points $G(\overline{k})$.  If $H$ is any $k$-subgroup of $G$ then we denote by $H(k)$ the group of $k$-points of $H$.  More generally, if $C$ is any subset of $G$---not necessarily closed or $k$-defined---then we set $C(k)= C\cap G(k)$.  By \cite[V.18.3 Cor.]{MR1102012}, $G(k)$ is dense in $G$.

Fix a maximal split $k$-torus $S$ of $G$.  Let $L= C_G(S)$ and fix a $k$-parabolic subgroup $P$ such that $L$ is a Levi subgroup of $P$.  Set $U= R_u(P)$.  Then $P$ is a minimal $k$-parabolic subgroup of $G$, $L$ and $S$ are $k$-defined and $P$, $S$ are unique up to $G^+(k)$-conjugacy \cite[15.4.7]{MR2458469}.  Fix a maximal $k$-torus $T$ of $G$ such that $S\subseteq T$ and a (not necessarily $k$-defined) Borel subgroup $B$ of $G$ such that $T\subseteq B\subseteq P$.

\begin{notation}
\label{notn:ball}
 If $X\subseteq G^+(k)$ then we write $B_X(n)$ for $B_X^{G^+(k)}(n)$.
\end{notation}

\begin{lemma}
\label{lem:open_prod}
 Let $O, O'$ be nonempty open subsets of $G$.  For any $g\in G(k)$, there exist $h\in O(k)$ and $h'\in O'(k)$ such that $g=hh'$.
\end{lemma}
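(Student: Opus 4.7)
The plan is to reduce the problem to finding a single $k$-rational point in the intersection of two nonempty open subsets of $G$, and then to invoke the Zariski-density of $G(k)$ in $G$ already quoted in this section.

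Writing $h' = h^{-1}g$, the condition $h' \in O'$ is equivalent to $h \in g(O')^{-1}$, so it suffices to produce an element $h \in G(k) \cap O \cap g(O')^{-1}$: then $h' \defeq h^{-1}g$ automatically lies in $G(k) \cap O'$, and by the convention for $C(k)$ adopted in Section~\ref{sec:LAG} this gives $h \in O(k)$ and $h' \in O'(k)$ with $g = hh'$.

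Next I would verify that the set $g(O')^{-1}$ is nonempty and open. The map $\varphi \colon G \to G$ defined by $\varphi(x) = gx^{-1}$ is an isomorphism of varieties, being the composition of the inverse morphism with left translation by $g$; hence $g(O')^{-1} = \varphi(O')$ is a nonempty open subset of $G$. Since $G$ is semisimple, it is connected and therefore irreducible, so the two nonempty open subsets $O$ and $g(O')^{-1}$ must meet in a nonempty open set $W$. Finally, density of $G(k)$ in $G$ \cite[V.18.3 Cor.]{MR1102012} guarantees that $W$ contains a $k$-point $h$, which is the element we need.

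There is no real obstacle in this lemma; the only mildly delicate point is that $O$ and $O'$ are arbitrary open subsets and are not assumed to be $k$-defined, so we do not have any $k$-rational structure on them directly. This is why density of $G(k)$ in $G$ (rather than, say, a weak-approximation-type statement restricted to $k$-defined open subsets) is the appropriate tool, since density handles all nonempty Zariski-open subsets uniformly.
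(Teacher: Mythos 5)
Your proof is correct and is essentially the same as the paper's: both arguments translate one of the open sets, use irreducibility of $G$ to see that the translated set meets the other, and then invoke density of $G(k)$ to find a $k$-point in the intersection. The only cosmetic difference is that you locate $h \in O \cap g(O')^{-1}$ and set $h' = h^{-1}g$, whereas the paper locates $h' \in O^{-1}g \cap O'$ and solves for $h$.
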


\begin{proof}
 Since $G$ is irreducible as a variety, $O^{-1}g\cap O'$ is an open dense subset of $G$.  Since $G(k)$ is dense in $G$, we can choose $h'\in (O^{-1}g)(k)\cap O'(k)$.  We can write $h'=h^{-1}g$ for some $h\in O(k)$.  This yields $g=hh'$, as required.
\end{proof}

For the rest of the section we assume that $G$ is split over $k$; then $S= T$ and $P= B$.  Let $\Psi_T$ denote the set of roots of $G$ with respect to $T$.  For $\alpha\in \Psi_T$, we denote by $U_\alpha$ the corresponding root group.  Let $\alpha_1,\ldots, \alpha_r$ be the base for the set of positive roots associated to $B$.  Note that $U_{\alpha_i}$ commutes with $U_{-\alpha_j}$ if $i\neq j$ because $\alpha_i- \alpha_j$ is not a root.  Let $U^-$ be the opposite unipotent subgroup to $U$ with respect to $T$.  Let $G_\alpha= \langle U_\alpha\cup U_{-\alpha}\rangle$ for $\alpha\in \Psi_T$; then $G_\alpha$ is $k$-isomorphic to either $\SL_2$ or $\PGL_2$.  Let $\alpha^\vee\colon {\mathbb G}_m\to G_\alpha$ be the coroot associated to $\alpha$.  The image $T_\alpha$ of $\alpha^\vee$ is $G_\alpha\cap T$, and this is a maximal torus of $G_\alpha$.

 We use the Bruhat decomposition for $G(k)$.  We recall the necessary facts \cite[Sec.\ V.14, Sec.\ V.21]{MR1102012}.  Fix a set $\widetilde{W}\subseteq N_G(T)(k)$ of representatives for the Weyl group; we denote by $n_0\in \widetilde{W}$ the representative corresponding to the longest element of $W$ (note that $n_0^2\in T(k)$ and $n_0Un_0^{-1}= U^-$).  The Bruhat decomposition $G= \bigsqcup_{n\in \widetilde{W}} BnB$ for $G$ yields a decomposition $G(k)= \bigsqcup_{n\in \widetilde{W}} B(k)nB(k)$ for $G(k)$ \cite[Thm.\ V.21.15]{MR1102012}.  The double coset $Bn_0B$ is open and $k$-defined.  The map $U\times B\to Bn_0B$, $(u,b)\mapsto un_0b$ is an isomorphism of varieties.  Hence if $g\in Bn_0B(k)$ then $g= un_0b$ for unique $u\in U$ and $b\in B$, and it follows that $u\in U(k)$ and $b\in B(k)$.  Likewise, multiplication gives $k$-isomorphisms of varieties
 $$ U^-\times T\times U\to U^-\times B\to U^-B= n_0(Bn_0B), $$
 so $U^-B$ is open and $(U^-B)(k)= U^-(k)B(k)= U^-(k)T(k)U(k)$.

\section{The algebraically closed case}
\label{sec:algclosed}

 Throughout this section $k$ is algebraically closed.  We need to recall some results from geometric invariant theory \cite[Ch.\ 3]{MR546290}.  Let $H$ be a reductive group acting on an affine variety $X$ over $\overline{k}$.  We denote the orbit of $x\in X$ by $H\cdot x$ and the stabiliser of $x$ by $H_x$.  One may form the affine quotient variety $X/H$.  The points of $X/H$ correspond to the closed $H$-orbits. We have a canonical projection $\pi_X\colon X\to X/H$.  The closure $\overline{H\cdot x}$ of any orbit $H\cdot x$ contains a unique closed orbit $H\cdot y$, and we have $\pi_X(x)= \pi_X(y)$.  If $C\subseteq X$ is closed and $H$-stable then $\pi_X(C)$ is closed.
 
 In particular, $H$ acts on itself by inner automorphisms---that is, by conjugation---and the orbit $H\cdot h$ is the conjugacy class of $h$.  We denote the quotient variety by $H/{\rm Inn}(H)$ and the canonical projection by $\pi_H\colon H\to H/{\rm Inn}(H)$.  If $h= h_sh_u$ is the Jordan decomposition of $h$ then $H\cdot h_s$ is the unique closed orbit contained in $\overline{H\cdot h}$; so $H\cdot h$ is closed if and only if $h$ is semisimple, and $\pi_H(h)= \pi_H(1)$ if and only if $h$ is unipotent.  Fix a maximal torus $T$ of $H$.  The Weyl group $W$ acts on $T$ by conjugation.  The inclusion of $T$ in $G$ gives rise to a map $\psi_T\colon T/W\to H/{\rm Inn}(H)$; it is well known that $\psi_T$ is an isomorphism of varieties.
 
Now assume $G$ is simply connected.  We can write $G\cong G_1\times\cdots \times G_m$, where the $G_i$ are simple.  Let $\nu_i\colon G\to G_i$ be the canonical projection.  Set $r_i= \rank(G_i)$ for $1\leq i\leq m$.

\begin{lemma}
\label{lem:nonunipt}
 Let $C$ be a closed $G$-stable subset of $G$ such that $C\not\subseteq Z(G)$.  Then there exist $g\in C$ and $x\in G$ such that $[g,x]$ is not unipotent. 
\end{lemma}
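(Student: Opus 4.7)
The plan is to case-split on the Jordan decomposition of elements of $C$.  A key preliminary: for any $g \in C$, the orbit closure $\overline{G \cdot g}$ lies in $C$ (by closedness and $G$-stability) and contains $G \cdot g_s$ as its unique closed orbit, so $g_s \in C$.

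\textbf{Case A} (some $g \in C$ has $g_s \notin Z(G)$): Then $g_s \in C$; using $G$-stability of $C$, we may further conjugate so that $g_s \in T$.  Since $G$ is simply connected semisimple, $Z(G) = T^W$, so some $w \in W$ satisfies $w(g_s) \neq g_s$.  Picking a representative $n \in N_G(T)$, we compute
\[
[g_s, n] \;=\; g_s \cdot n g_s^{-1} n^{-1} \;=\; g_s \cdot w(g_s)^{-1} \;\in\; T \setminus \{1\},
\]
a non-trivial semisimple element, in particular not unipotent.

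\textbf{Case B} (every $g \in C$ has $g_s \in Z(G)$): Then $C \subseteq Z(G) \cdot \C U$, where $\C U$ denotes the unipotent variety.  As $C \not\subseteq Z(G)$, some $g = zu \in C$ has $z \in Z(G)$ and $u \neq 1$ unipotent; and $[g, x] = [u, x]$ since $z$ is central.  It remains to produce $x \in G$ with $[u, x]$ not unipotent.  Using $G \cong G_1 \times \cdots \times G_m$, some component $u_i$ is non-trivial, and taking $x = (1, \ldots, x_i, \ldots, 1)$ reduces further to the case $G$ simple and $u \neq 1$ unipotent.  Conjugate so that $u$ lies in the unipotent radical $U$ of a Borel $B = TU$.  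Consider the height filtration $U = U^{(1)} \supseteq U^{(2)} \supseteq \cdots$, where $U^{(i)}$ is the subgroup generated by those $U_\gamma$ with $\OP{ht}(\gamma) \geq i$; let $i$ be maximal with $u \in U^{(i)}$, and let $\beta$ be a positive root of height $i$ such that the image of $u$ in $U^{(i)}/U^{(i+1)} \cong \bigoplus_{\OP{ht}(\gamma) = i} U_\gamma$ has non-trivial $\beta$-component $u_\beta(c)$ with $c \neq 0$.  Setting $x = u_{-\beta}(t)$ for $t \in k \setminus \{0\}$ generic, the commutator identity $[ab, y] = a[b, y] a^{-1} [a, y]$ and the Chevalley commutator relations show that the contributions to $[u, x]$ from components of $u$ other than $u_\beta(c)$ lie in $U$ (for each other root $\gamma$ appearing, $\gamma - \beta$ is either not a root or is a positive root of strictly larger height), while $[u_\beta(c), u_{-\beta}(t)]$, computed inside $G_\beta \cong \SL_2$ (or $\PGL_2$), has $T_\beta$-Bruhat component $\beta^\vee(1 + ct + c^2 t^2) \neq 1$ for $ct \neq 0$.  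A Bruhat-decomposition argument in $G$ then shows that the $T$-component of $[u, x]$ is non-trivial (for generic $t$), so $[u, x]$ has non-trivial semisimple part and is not unipotent.

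\textbf{Main obstacle.}  Case A is immediate from the orbit-closure observation combined with the Weyl group argument.  The substance lies in Case B: identifying the correct root $\beta$ via the height filtration of $U$, and then verifying through careful Chevalley- and Bruhat-style bookkeeping that the non-trivial $T_\beta$-contribution arising from the $\SL_2$-commutator inside $G_\beta$ survives and is not cancelled by the unipotent contributions from the higher-height components of $u$.  This verification is characteristic-free, sidestepping the need for Jacobson--Morozov or other tools that can fail in small characteristics.
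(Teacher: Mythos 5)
Your Case A is sound and essentially the same as the paper's argument (the paper conjugates $g_s$ into a maximal torus and pairs it against a suitable element of the normaliser; your version using $Z(G)=T^W$ for simply connected $G$ amounts to the same thing).

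Case B, however, has two genuine gaps, and this is precisely the step the paper handles differently.

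First, the bookkeeping claim that the contributions to $[u,u_{-\beta}(t)]$ from the non-$\beta$ components of $u$ lie in $U$ is false in general. You only examine $\gamma-\beta$, but the Chevalley commutator formula produces terms $u_{j\gamma-k\beta}$ for all positive integers $j,k$ with $j\gamma-k\beta$ a root, and for $k\geq 2$ these can be negative. A concrete failure: in type $B_2$ with positive roots $e_2$, $e_1-e_2$, $e_1$, $e_1+e_2$, take $i=2$ and $\beta=e_1$; if $v$ has a component in $U_{e_1+e_2}$ then $[u_{e_1+e_2}(c'),u_{-e_1}(t)]$ contains a factor in $U_{e_2-e_1}\subseteq U^-$ (from $(j,k)=(1,2)$), so $[v,u_{-\beta}(t)]\notin U$.

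Second, and more fundamentally, even granting that $[u,x]=w\cdot g_\beta$ with $w\in U$ and $g_\beta=[u_\beta(c),u_{-\beta}(t)]\in G_\beta$ not unipotent, the inference from ``nontrivial $T$-component in the $U^-TU$ Bruhat factorisation'' to ``nontrivial semisimple part'' is invalid. The $T$-component in a Bruhat-type factorisation is not conjugation-invariant and is not a witness of the semisimple part: already in $\SL_2$, the unipotent matrix $\left(\begin{smallmatrix} 2 & 2y \\ 2x & 2xy+1/2 \end{smallmatrix}\right)$ with $xy=-1/4$ lies in the big cell with $T$-component $\OP{diag}(2,1/2)\neq 1$. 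So a nontrivial $T$-component does not rule out unipotence, and an argument of this shape cannot close Case B.

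The paper avoids both difficulties by invoking a degeneration result (\cite[Lem.\ 3.2]{MR2125071}): since $g_s\in Z(G)$ and $g_u\neq 1$, the orbit closure $\overline{G\cdot g}$ already contains an element $g_su$ with $1\neq u$ lying in a \emph{single} root group $U_\alpha$. Then one takes $x=n$ a Weyl representative in $G_\alpha\cong\SL_2$ or $\PGL_2$, and $[g_su,n]=[u,n]$ is visibly non-unipotent by a $2\times 2$ trace computation. This sidesteps all the Chevalley bookkeeping and works uniformly in all characteristics. If you want to avoid citing that lemma, you need a genuinely different mechanism for detecting non-unipotence (e.g.\ a trace computation in a suitable representation, uniform over all the cross-terms), not the Bruhat $T$-component.
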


\begin{proof}
 Let $g\in C$ such that $g\not\in Z(G)$.  Note that $g_s\in C$ as $C$ is closed and conjugation-invariant.  If $g_s$ is not central in $G$ then we can choose a maximal torus $T'$ of $G$ such that $g_s\in T'$; then $[g_s,x]$ is a nontrivial element of $T$ for some $x\in N_G(T)$, and we are done.  So we can assume $g_s$ is central in $G$.  Then $g_u$ is a nontrivial unipotent element of $G$.
By \cite[Lem.\ 3.2]{MR2125071},
 $\overline{G\cdot g}$ contains an element of the form $g_su$, where $1\neq u$ belongs to some root group $U_\alpha$.  Let $n\in N_{G_\alpha}(T_\alpha)$ represent the nontrivial element of the Weyl group $N_{G_\alpha}(T_\alpha)/T_\alpha$.  Recall that $G_\alpha$ is isomorphic to $\SL_2$ or $\PGL_2$.  Explicit calculations with $2\times 2$ matrices (cf.\ the proof of Lemma~\ref{lem:max_tor} below) show that $[u,n]= [g_su,n]$ is not unipotent.  This completes the proof.
\end{proof}

Suppose we are given $G$-conjugacy classes $C_1,\ldots, C_m$ of $G$ such that for each $i$, $\nu_i(C_i)$ is noncentral in $G_i$ (we do not insist that the $C_i$ are all distinct).  Set $D_i= [C_i,G_i]$ and $E_i= \overline{D_i}= \overline{[\overline{C_i}, G_i]}$.  Note that for each $i$, $D_i$ is conjugation-invariant and constructible, and $D_i^{-1}= D_i$; likewise, $E_i$ is conjugation-invariant and irreducible, and $E_i^{-1}= E_i$.
  
\begin{proposition}
\label{prop:dense}
 Let $G$, etc., be as above, and set
 $X= D_1\cup\cdots \cup D_m$.  Then $B_X(r)$ contains a constructible dense subset of $G$.
\end{proposition}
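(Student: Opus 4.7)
The plan is to reduce to the case of a single simple simply connected factor $H$ of rank $\ell$, pass to the categorical quotient $\pi\colon H \to H/\operatorname{Inn}(H) \cong T/W \cong \mathbb{A}^\ell$, and prove density of $D^\ell$ in $H$ by an inductive argument showing that $\overline{\pi(D^k)}$ strictly increases in dimension at each step until $k = \ell$. Note that $D^\ell$ is automatically constructible as the image of an iterated multiplication morphism (by Chevalley), so density is the only thing to verify.

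\textbf{Reduction to a single simple factor.} Since each $G_i$ is normal in $G$ we have $D_i \subseteq G_i$, and elements of $D_i$, $D_j$ commute for $i \ne j$. Together with $1 = [c, 1] \in D_i$ and the conjugation-invariance of $D_i$, this gives $B^G_{D_i}(k) = D_i^k$ and
\[
B_X(r) \supseteq D_1^{r_1} \cdots D_m^{r_m}.
\]
Since $D_i = [\nu_i(C_i), G_i]$ depends only on the noncentral $G_i$-class $\nu_i(C_i)$, it suffices to prove: for $H$ simple simply connected of rank $\ell$ and $C \subseteq H$ a noncentral conjugacy class with $D = [C, H]$, the product $D^\ell$ is dense in $H$.

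\textbf{Passage to the quotient, and dimension growth.} Density of $D^\ell$ in $H$ is equivalent to density of $\pi(D^\ell)$ in $\mathbb{A}^\ell$: an open dense $V \subseteq \pi(D^\ell)$ pulls back, intersected with the regular semisimple locus $H^{\mathrm{rss}}$, to an open dense subset of $H$; over $H^{\mathrm{rss}}$ each $\pi$-fiber is a single conjugacy class, and any such fiber meeting the conjugation-invariant set $D^\ell$ lies entirely in $D^\ell$. Set $Y_k := \overline{\pi(D^k)}$; each is irreducible with $Y_k \subseteq Y_{k+1}$. The base case $\dim Y_1 \ge 1$ comes from Lemma~\ref{lem:nonunipt} applied to $\overline{C}$: it yields $c \in \overline{C}$ and $x \in H$ with $[c, x] \in D$ non-unipotent, so $\pi([c, x]) \ne \pi(1)$. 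Since $Y_k, Y_{k+1}$ are irreducible, it is enough to show $Y_{k+1} \supsetneq Y_k$ whenever $Y_k \ne \mathbb{A}^\ell$; this yields $\dim Y_\ell \ge \ell$, hence $Y_\ell = \mathbb{A}^\ell$.

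\textbf{The strict-increment step, and the main obstacle.} Suppose for contradiction $Y_{k+1} = Y_k \ne \mathbb{A}^\ell$, and let $Z := \overline{D^k}$, $E := \overline{D}$; these are closed, irreducible, conjugation-invariant, and contain $1$. The key geometric idea is that for a generic pair $(e, z) \in E \times Z$ with $ez \in H^{\mathrm{rss}}$, the hypothesis $\pi(ez) \in Y_{k+1} = Y_k = \pi(Z)$ combined with the single-fiber property of $\pi$ over $H^{\mathrm{rss}}$ forces $ez$ to be $H$-conjugate to an element of $Z$, hence $ez \in Z$ by conjugation-invariance. Closedness of $Z$ upgrades this to $E \cdot Z \subseteq Z$, and iterating gives $\overline{\langle D\rangle}\cdot Z \subseteq Z$. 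But $\overline{\langle D\rangle}$ is a closed normal subgroup of the simple simply connected group $H$ containing a noncentral element, so $\overline{\langle D\rangle} = H$; combined with $1 \in Z$ this forces $Z = H$ and $Y_k = \mathbb{A}^\ell$, a contradiction. The delicate technical point—likely the hardest piece to verify rigorously—is ensuring that $E \cdot Z$ meets $H^{\mathrm{rss}}$ so that the generic-fiber argument has content; this should follow from the positivity $\dim Y_k \ge 1$, possibly after a separate treatment of the case where $\pi(\overline{D^k})$ lies entirely in the discriminant locus of $\mathbb{A}^\ell$.
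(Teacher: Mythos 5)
Your approach is genuinely different from the paper's: you work directly in $H/\operatorname{Inn}(H) \cong T/W$ and try to force the image $Y_k = \overline{\pi(D^k)}$ to grow in dimension at each step, whereas the paper lifts back to the maximal torus $T$, finds a positive-dimensional component $A'$ of $E\cap T$, and uses the absolute irreducibility of the $W$-action on $X(T)\otimes\mathbb{R}$ to conclude that $A'$ and its $W$-conjugates generate $T$, after which density of a product of $r$ of them in $T$ is a purely toral fact. The reduction to a single simple factor, the passage to $T/W$, the use of Lemma~\ref{lem:nonunipt} to get $\dim Y_1 \ge 1$, and the observation that density over the regular semisimple locus upgrades to density in $H$ are all fine and match the spirit of the paper.

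However, the gap you flagged is real and is not filled by what you wrote, and I do not believe it follows from ``$\dim Y_k \ge 1$.'' The inductive step assumes $Y_{k+1}=Y_k\ne\mathbb{A}^\ell$ and then needs the open set $\{(e,z)\in E\times Z : ez\in H^{\mathrm{rss}}\}$ to be nonempty in order to upgrade the pointwise observation to $E\cdot Z\subseteq Z$. But under the very hypothesis $Y_{k+1}=Y_k$, one has $\pi(EZ)\subseteq\pi(\overline{D^{k+1}})=Y_{k+1}=Y_k$, so if $Y_k$ lies in the discriminant locus $\Delta\subset\mathbb{A}^\ell$ then $EZ$ automatically avoids $H^{\mathrm{rss}}$ (an element whose image lies outside $\Delta$ has regular semisimple part with trivial unipotent part, hence is regular semisimple itself). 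Since $\Delta$ has dimension $\ell-1$, the condition $\dim Y_k\ge 1$ is perfectly compatible with $Y_k\subseteq\Delta$ whenever $\ell\ge 2$, so your proposed rescue does not work. The ``separate treatment'' of that case is exactly the hard point, and I don't see how to do it without something essentially equivalent to the paper's mechanism: the paper's argument works with $A'\subseteq E\cap T$ and shows products of $r$ of its $W$-translates are dense in $T$ \emph{regardless} of whether $A'$ (hence $E$) meets the regular locus — the escape from $\Delta$ is automatic after $r$ multiplications precisely because the subgroup generated is all of $T$. So as written, your proof has a genuine hole in the strict-increment step; the remaining scaffolding (reduction, base case, the final deduction from $\overline{\langle D\rangle}=H$) is sound.
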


\begin{proof}
 It suffices to prove that the constructible set
 $D_{i_1}\cdots D_{i_r}$ is dense in $G$ for some $i_1,\ldots, i_r$.  It is enough to show that
 the constructible set $E_{i_1}\cdots E_{i_r}$ is a dense subset of $G$ for some $i_1,\ldots, i_r$.
 
 Fix a maximal torus $T$ of $G$ and set $T_i= T\cap G_i$ for each $i$.  Clearly it is enough to prove that $(E_i)^{r_i}$ is a dense subset of $G_i$ for each $i$.  For notational convenience, we assume therefore that $m= 1$ and $G= G_1$ is simple; then $T= T_1$.  Set $C= C_1= \nu_1(C_1)$ and $E= E_1$; we prove that $E^r$ is a dense subset of $G$.  By hypothesis, $E= \overline{[\overline{C},G]}$ is an irreducible positive-dimensional subvariety of $G$.  Set $A= E\cap T$.  We claim that $A$ has an irreducible component $A'$ such that $\dim(A')> 0$.
 
 Set $F= \pi_G(E)$; note that $F$ is closed and irreducible because $E$ is closed, conjugation-invariant and irreducible.  Suppose $\dim(F)= 0$.  Since $1\in E$, we have $F= \{\pi_G(1)\}$, which forces $E$ to consist of unipotent elements.  But this is impossible by Lemma~\ref{lem:nonunipt}.  We deduce that $\dim(F)>0$.  Clearly $\pi_G(A)\subseteq F$.  Conversely, given $g\in E$, write $g= g_sg_u$ (Jordan decomposition).  Since $E$ is conjugation-invariant, we can, by conjugating $g$, assume without loss that $g_s\in T$.  We have $g_s\in \overline{G\cdot g}\cap T\subseteq A$ and $\pi_G(g_s)= \pi_G(g)$.  This shows that $F\subseteq \pi_G(A)$.  Hence $F= \pi_G(A)$.  Let $\pi_W\colon T\to T/W$ be the canonical projection.  Now $F':= \psi_T^{-1}(F)$ is an irreducible closed positive-dimensional subset of $T/W$, with $A= \pi_W^{-1}(F')$.  Since $W$ is finite, $\pi_W$ is a finite map and the fibres of $\pi_W$ are precisely the $W$-orbits.
 Hence the irreducible components of $A$ are permuted transitively by $W$, and each surjects onto $F'$.  Thus any irreducible component $A'$ of $A$ has the desired properties.
 
 Let $A_1,\ldots, A_t$ be the $W$-conjugates of $A'$.  The $A_i$ generate a nontrivial $W$-stable subtorus $S$ of $T$.  Hence the subset $V$ of $X(T)\otimes_{\mathbb Z} {\mathbb R}$ spanned by $\{\chi\in X(T)\,|\,\chi(S)= 1\}$ is proper and $W$-stable.  But $W$ acts absolutely irreducibly on $X(T)\otimes_{\mathbb Z} {\mathbb R}$,
 so $V= 0$.  This forces $S$ to be the whole of $T$.  So the $A_i$ generate $T$.  By the argument of \cite[Sec.\ 5]{KLM1} or \cite[7.5\ Prop.]{MR0396773}, there exist $i_1,\ldots, i_r\in \{1,\ldots, t\}$ and $\epsilon_1,\ldots, \epsilon_r\in \{\pm 1\}$ such that $A_{i_1}^{\epsilon_1}\cdots A_{i_r}^{\epsilon_r}$ is a constructible dense subset of $T$.  Hence $E^r$ contains a constructible dense subset of $T$, and we deduce that $E^r$ is a constructible dense subset of $G$.  This completes the proof.
\end{proof}

\begin{proof}[Proof of Theorem~\ref{thm:algclosed}]
 We have $\Delta(\widetilde{G})\leq \Delta(G)$ by \cite[Lem.\ 2.16]{KLM1}, where $\widetilde{G}$ is the simply connected cover of $G$.  Hence there is no harm in assuming $G$ is simply connected.  Let $X$ be a finite normal generating set for $G$.  We can choose $x_1,\ldots, x_m\in X$ such that $\nu_i(x_i)$ is noncentral in $G_i$ for $1\leq i\leq m$.  Let $C_i= G\cdot x_i\subseteq X$, let $D_i= [C_i,G]$ and let $X'= D_1\cup\cdots \cup D_m$.  By Proposition~\ref{prop:dense}, $B_{X'}(r)$ contains a dense constructible subset of $G$.  Since $D_i\subseteq B_{C_i}(2)$ for each $i$, $B_X(2r)$ contains a nonempty open subset $U$ of $G$.  Now $U^2= G$ by \cite[I.1.3\ Prop.]{MR1102012}, so $B_X(4r)\supseteq B_X(2r)B_X(2r)\supseteq U^2= G$.  It follows that $\Delta(G)\leq 4r$, as required. 
\end{proof}

\section{The isotropic case}
\label{sec:isotropic}

Now we consider the case of arbitrary semisimple $G$.  There is no harm in replacing $G$ with the Zariski closure of $G^+(k)$, which is the product of the isotropic $k$-simple factors of $G$.  Hence we assume in this section that $G^+(k)$ is dense in $G$.

We start by noting a corollary of Proposition~\ref{prop:dense}.
Let $X\subseteq G^+(k)$ such that $X$ is a finite normal generating set for $G$.  By Proposition~\ref{prop:dense}, there exist $i_1,\ldots, i_r\in \{1,\ldots, m\}$ such that the image of the map $f\colon G^{2r}\to G$ defined by
$$ f(h_1,\ldots, h_r,g_1,\ldots, g_r)= (h_1x_1h_1^{-1}g_1x_1^{-1}g_1^{-1})\cdots (h_rx_
rh_r^{-1}g_rx_r^{-1}g_r^{-1}) $$
contains a nonempty open subset $G'$ of $G$.  Now let $O$ be a nonempty open subset of $G$.  Then $f^{-1}(G'\cap O)$ is a nonempty open subset of $G^{2r}$.  But $G^+(k)$ is dense in $G$, so $G^+(k)^{2r}$ is dense in $G^{2r}$.  It follows that $f(h_1,\ldots, h_r,g_1,\ldots, g_r)\in O$ for some $h_1,\ldots, h_r$, $g_1,\ldots, g_r\in G^+(k)^{2r}$.  We deduce that for any nonempty open subset $O$ of $G$,
\begin{equation}
\label{eqn:k_dense}
 B_X(2r)\cap O\neq \emptyset.
\end{equation}
 
\begin{remark}
 Let $C= {\rm im}(f)$, where $f$ is as above.  It follows from Eqn.~(\ref{eqn:k_dense}) and Lemma~\ref{lem:open_prod} that $C(k)^2= G(k)$.  We cannot, however, conclude directly from this that $B_X(2r)^2= G(k)$: the problem is that although the map $f\colon G^{2r}\to C$ is surjective on $\overline{k}$-points, it need not be surjective on $k$-points.
\end{remark}

\begin{lemma}
\label{lem:reg_ss_par}
 There exists $t\in P(k)$ such that $t$ is regular semisimple.
\end{lemma}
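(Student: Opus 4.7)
The plan is to produce $t$ by a standard density/genericity argument. Let $G^{\OP{rss}}\subseteq G$ denote the Zariski-open, $k$-defined subset of regular semisimple elements (the non-vanishing locus of the discriminant of the characteristic polynomial of $\Ad$). First I would observe that $G^{\OP{rss}}\cap T$ is the complement in $T$ of the finite union of the kernels of the (absolute) roots of $T$ in $G$. Each such kernel is a proper closed subgroup of $T$, so $G^{\OP{rss}}\cap T$ is a nonempty Zariski-open (and dense) subset of $T$. Since $T\subseteq B\subseteq P$, it follows that $G^{\OP{rss}}\cap P$ is a nonempty $k$-defined Zariski-open subset of $P$; as $P$ is connected, hence irreducible, this open subset is dense in $P$.

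The next step is to invoke the density of $P(k)$ in $P$. Using the Levi decomposition $P=L\ltimes U$, the unipotent radical $U=R_u(P)$ is $k$-isomorphic as a variety to an affine space, so $U(k)$ is dense in $U$; and $L(k)$ is dense in $L$ by \cite[V.18.3 Cor.]{MR1102012} applied to the reductive $k$-group $L$. Multiplication gives $P(k)=L(k)\cdot U(k)$, which is therefore dense in $P$.

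Finally, a Zariski-dense subset of an irreducible variety meets every nonempty Zariski-open subset, so $P(k)\cap G^{\OP{rss}}\neq\emptyset$; any element of this intersection serves as the required $t$. There is no real obstacle here: the argument is of a piece with the density facts already used in Section~\ref{sec:LAG}, and the only point one should confirm is the (standard) density of $P(k)$ in $P$ over our infinite field $k$.
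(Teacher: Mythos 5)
Your proof is correct and takes a mildly different route from the paper's. You observe that the maximal torus $T$, which by the conventions of Section~\ref{sec:LAG} satisfies $T\subseteq B\subseteq P$, already contains regular semisimple elements (the complement in $T$ of the finite union of the kernels of the roots of $T$ in $G$), so $G^{\OP{rss}}\cap P$ is a nonempty open, hence dense, subset of the irreducible variety $P$, and therefore meets the Zariski-dense set $P(k)$. The paper instead introduces the conjugation map $f\colon G\times P\to G$, $(g,h)\mapsto ghg^{-1}$, uses its surjectivity (every element of $G$ lies in some conjugate of $P$) to pull back the open regular semisimple locus to a nonempty open subset of $G\times P$, and chooses a $k$-point $(g,t)$ in that open set, concluding that $t$ is regular semisimple since its conjugate $gtg^{-1}$ is. Your version is a bit more direct, bypassing the surjectivity step and the auxiliary product variety by using the explicit inclusion $T\subseteq P$. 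Both arguments hinge on the same two facts: that $P(k)$ is Zariski-dense in $P$ (you derive this from the Levi decomposition $P=L\ltimes U$ with $U$ $k$-split; the paper quotes \cite[Thm.\ 21.20(ii)]{MR1102012}) and that the regular semisimple locus is a nonempty open subset of $G$.
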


\begin{proof}
 Define $f\colon G\times P\to G$ by $f(g,h)= ghg^{-1}$.  Then $f$ is surjective since every element of $G$ belongs to a Borel subgroup of $G$.  Let $O$ be the set of regular semisimple elements of $G$, a nonempty open subset of $G$.  By \cite[Thm.\ 21.20(ii)]{MR1102012}, $P(k)$ is dense in $P$, and we know that $G(k)$ is dense in $G$, so $G(k)\times P(k)$ is dense in $G\times P$.  It follows that there is a point $(g,t)\in (G(k)\times P(k))\cap f^{-1}(O)$.  Then $gtg^{-1}$ is regular semisimple, so $t\in P(k)$ is regular semisimple also.
\end{proof}

\begin{lemma}
\label{lem:reg_ss}
 Let $t\in P(k)$ be regular semisimple.  Then $U(k)\subseteq B_t(2)$.
\end{lemma}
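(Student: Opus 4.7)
My plan is to reduce the statement to solving the equation $u = vtv^{-1}t^{-1}$ with $v\in U(k)$ for each given $u\in U(k)$.  Such a solution writes $u = (vtv^{-1})\cdot t^{-1}$ as a product of two conjugates of $t^{\pm 1}$ by elements of $U(k)\subseteq G^+(k)$ (namely $v$ and $1$), placing $u$ in $B_t(2)$.

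To produce $v$, I would study the $k$-morphism $c\colon U\to U$ defined by $c(v) = [v,t] = vtv^{-1}t^{-1}$, which is well-defined because $t\in P$ normalises $U$.  The aim is to show $c$ is an isomorphism of $k$-varieties, so that passing to $k$-points gives a bijection $U(k)\to U(k)$ and hence a preimage for any given $u$.

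The engine of the argument is the regular semisimple hypothesis on $t$.  Choosing any maximal torus $T''$ of $G$ containing the semisimple element $t$, the root space decomposition of $\mathfrak{g}$ relative to $T''$ together with the regularity condition $\alpha(t)\neq 1$ for every root $\alpha$ shows that $\Ad(t)$ has no nonzero fixed vectors in $\mathfrak{u}$.  Hence the differential $dc_1 = 1-\Ad(t)\colon \mathfrak{u}\to\mathfrak{u}$ is a linear isomorphism.  To promote this to a global $k$-isomorphism I would use a $k$-defined central filtration $U = U_1\supsetneq U_2\supsetneq\cdots\supsetneq U_n = 1$ of $U$ by vector-group quotients (which exists because $U$ is the unipotent radical of a $k$-parabolic) and proceed by induction down the filtration: on each graded piece $U_i/U_{i+1}$, $c$ descends to the linear map $1-\Ad(t)$, which is $k$-invertible by the above.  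Piecing these step-by-step inverses together yields a $k$-rational inverse to $c$.

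The main technical obstacle will be the inductive lift itself: at each stage, solving the equation modulo $U_{i+1}$ and lifting to a solution modulo $U_{i+2}$ introduces a commutator correction in $U_{i+2}$ that must be absorbed into the next layer's linear equation.  Because each layer's linear map is $k$-invertible, the correction is always solvable, and the induction terminates at $U_n = 1$, yielding the desired $v\in U(k)$.
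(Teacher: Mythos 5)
Your proposal is correct, and it takes a genuinely different route from the paper's. You study the same map $c(v)=vtv^{-1}t^{-1}$ on $U$, but you prove it is a $k$-bijection by a Hensel-style induction up a $t$-stable, $k$-defined central filtration of $U$ with vector-group quotients, using that $1-\Ad(t)$ is invertible on each graded piece (which follows from regularity of $t$, as you observe). The paper instead argues globally: the image of $c$ is closed because $U\cdot t$ is a unipotent-group orbit and hence closed; $c$ is injective with $dc$ an isomorphism at every point; and so, by Zariski's Main Theorem (a bijective birational morphism to a normal variety is an isomorphism), $c$ is a $k$-isomorphism of varieties. The paper's argument is shorter but leans on ZMT and closedness of unipotent orbits; yours is more hands-on, avoids ZMT, and has the virtue of producing the preimage in $U(k)$ constructively, layer by layer, rather than as a corollary of $c$ being a variety isomorphism. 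Two small points you should make explicit to close the argument: the filtration must be normalised by $t$ (taking the descending central series of $U$, which is characteristic and $k$-defined with $k$-split vector-group quotients since $U=R_u(P)$ is $k$-split, handles this automatically); and to see $T''\subseteq P$, note that $t\in P$ lies in some maximal torus of $P$, which by regularity must equal $C_G(t)^\circ=T''$.
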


\begin{proof}
 Define $f\colon U\to U$ by $f(u)= utu^{-1}t^{-1}$.  The conjugacy class $U\cdot t$ is closed because orbits of unipotent groups are closed,
 so ${\rm im}(f)$ is a closed subvariety of $U$.  Since $t$ is regular, it is easily checked that $f$ is injective and the derivative $df_u$ is an isomorphism for each $u\in U$.  It follows from Zariski's Main Theorem that $f$ is an isomorphism of varieties.  As $f$ is defined over $k$, $f$ gives a bijection from $U(k)$ to $U(k)$, and the result follows.
\end{proof}

\begin{lemma}
\label{lem:gengen}
 Let $X$ be a finite normal generating subset for $G^+(k)$.  Then $X$ normally generates $G$.
\end{lemma}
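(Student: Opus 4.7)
The plan is to show that the abstract normal closure $N$ of $X$ in $G=G(\overline{k})$ coincides with $G$. First, since $X\subseteq G^+(k)$ normally generates $G^+(k)$, every element of $G^+(k)$ is a product of $G^+(k)$-conjugates of elements of $X^{\pm 1}$; these are a fortiori $G$-conjugates, so $G^+(k)\subseteq N$.  By the standing assumption of this section, $G^+(k)$ is Zariski dense in $G$, so $N$ is also Zariski dense in $G$.

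The crux is to upgrade Zariski density to equality using the structure of abstract normal subgroups of $G(\overline{k})$.  Decomposing $G$ as an almost direct product of its almost-simple factors $G_1,\ldots,G_m$, we have $G/Z(G)\cong\prod_{i=1}^m G_i/Z(G_i)$, and each factor $S_i:=G_i/Z(G_i)$ is abstractly simple by the Chevalley--Tits simplicity theorem for semisimple groups over algebraically closed fields.  A standard commutator argument with coordinate projections shows that any abstract normal subgroup of a finite direct product of non-abelian simple groups is a product $\prod_{i\in I}S_i$ of some of the coordinate factors, and every such subgroup is Zariski closed (being the kernel of a coordinate projection).  Applying this to the image of $N$ in $G/Z(G)$ shows that $N\cdot Z(G)$ is Zariski closed in $G$; combined with Zariski density of $N$, we conclude $N\cdot Z(G)=G$, so $G/N$ is a quotient of the finite group $Z(G)$, hence abelian.

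To finish, recall that $G(\overline{k})$ is perfect as an abstract group: for any root $\alpha$ of $G$ with respect to a maximal torus $T$, the torus acts on $U_\alpha$ via the nontrivial character $\alpha$, giving $[T,U_\alpha]=U_\alpha$ and hence $U_\alpha\subseteq[G,G]$; since the root subgroups generate $G$, we have $G=[G,G]$.  Consequently the abelian quotient $G/N$ is trivial, and $N=G$, as required.

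The main obstacle is the structural bridge between abstract and Zariski-closed normal subgroups of $G(\overline{k})$; once this is in place via Chevalley--Tits simplicity and the elementary description of normal subgroups of a product of simple groups, the remaining steps---the containment $G^+(k)\subseteq N$, Zariski density, and the perfectness computation---are routine.
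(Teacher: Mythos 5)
Your proposal is correct, and it takes a genuinely different route from the paper.  Both proofs start by observing that the normal closure $N$ of $X$ in $G=G(\overline{k})$ contains $G^+(k)$ and is therefore Zariski dense.  From here the paper runs an elementary constructibility argument: the balls $(G\cdot X)^d$ are constructible, their closures stabilise by Noetherianity, and the stable closure contains the dense set $G^+(k)$, hence equals $G$; so some $(G\cdot X)^d$ is dense constructible, contains a nonempty open $O$, and $O\cdot O^{-1}=G$ since $G$ is connected.  You instead pin down the possible shape of $N$ as an \emph{abstract} normal subgroup: writing $G/Z(G)\cong\prod_i S_i$ with each $S_i$ abstractly simple (Tits simplicity over $\overline{k}$), the image of $N$ is a subproduct and hence Zariski closed, so $NZ(G)$ is closed and, by density, equals $G$; perfectness of $G(\overline{k})$ then forces $G/N$ (a quotient of the finite abelian group $Z(G)(\overline{k})$) to vanish.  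The trade-off: the paper's argument is lighter-weight and self-contained, relying only on basic algebraic geometry (constructibility, Noetherianity, connectedness), whereas yours imports the Chevalley--Tits simplicity theorem and perfectness of $G(\overline{k})$, but in exchange gives a sharper structural picture---it actually determines all abstract normal subgroups that could arise, rather than merely showing that the normal closure has enough conjugates to fill up $G$.  Both are valid; the paper's approach is more in keeping with the constructible/open-set techniques used throughout the rest of the section (e.g.\ Equation~(\ref{eqn:k_dense}) and Lemma~\ref{lem:open_prod}).
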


\begin{proof}
 There exists $d\in {\mathbb N}$ such that $(G(k)\cdot X)^d= G^+(k)$.  So the constructible set $(G\cdot X)^d$ contains $G^+(k)$ and is therefore dense in $G$.  This implies that $(G\cdot X)^d$ contains a nonempty open subset of $G$, so $(G\cdot X)^d(G\cdot X)^d= G$.  Hence $X$ is a finite normal generating set for $G$.
\end{proof}

\begin{proposition}
\label{prop:gettingU}
 Let $X$ be a finite subset of $G^+(k)$ such that $X$ normally generates $G$.  Then $U(k)\subseteq B_X(8r)$.
\end{proposition}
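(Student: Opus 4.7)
The plan is to find a regular semisimple element $t\in P(k)$ with $t\in B_X(4r)$, and then invoke Lemma~\ref{lem:reg_ss}.  Given such a $t$, Lemma~\ref{lem:reg_ss} expresses each $u_0\in U(k)$ as $u_0 = (utu^{-1})\cdot t^{-1}$ for some $u\in U(k)\subseteq G^+(k)$.  Both factors are $G^+(k)$-conjugates of $t^{\pm 1}$ and hence lie in $B_X(4r)$ (by $G^+(k)$-conjugation invariance of the ball $B_X$, together with $t^{-1}\in B_X(4r)$ whenever $t\in B_X(4r)$), so $u_0\in B_X(8r)$.

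To produce $t$, first fix a regular semisimple $t_0\in P(k)$ by Lemma~\ref{lem:reg_ss_par}.  The aim is to write $t_0 = h_1 h_2$ with $h_1,h_2\in B_X(2r)$, so that $t_0\in B_X(4r)$.  Eqn.~(\ref{eqn:k_dense}) tells us that $B_X(2r)$ meets every nonempty open subset of $G$ on $k$-points.  Applying this to the open dense set $G'\cap t_0(G')^{-1}\subseteq G$ (nonempty because $G'$ and $t_0(G')^{-1}$ are both open dense in the irreducible variety $G$), we obtain $h_1\in B_X(2r)$ such that $h_2:=h_1^{-1}t_0\in G'(k)$.  It remains to force $h_2\in B_X(2r)$.

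The main obstacle is precisely this last point.  Although $B_X(2r)\cap G'(k)$ is Zariski dense in $G'$ (as it contains $f(G^+(k)^{2r})$), density alone does not force the specific element $h_2 = h_1^{-1}t_0$ to lie in $B_X(2r)$.  Overcoming this requires a simultaneous two-variable density argument: one wants to show that the morphism $F\colon G^{4r}\to G$, $F(y,z)=f(y)f(z)$, attains the value $t_0$ on some tuple in $G^+(k)^{4r}$, whence $t_0\in F(G^+(k)^{4r})\subseteq B_X(4r)$.  As the Remark before Lemma~\ref{lem:reg_ss_par} warns, surjectivity of $F$ on $\overline{k}$-points does not suffice; the argument must exploit the structure of the $k$-defined fiber $F^{-1}(t_0)$, for instance by identifying a $k$-irreducible component and invoking density of $G^+(k)$-points on it, or by replacing $t_0$ with a nearby regular semisimple element of $P(k)$ for which this can be carried out more easily.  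I expect this to be the technical heart of the proof.
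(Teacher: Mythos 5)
Your plan is exactly right at the top level: find a regular semisimple $t\in P(k)$ lying in $B_X(4r)$ and then apply Lemma~\ref{lem:reg_ss} to get $U(k)\subseteq B_t(2)\subseteq B_X(8r)$. You have also correctly diagnosed the obstruction to the naive approach: Eqn.~(\ref{eqn:k_dense}) gives density of $B_X(2r)\cap O$ for every open $O$, but it gives no control over which $k$-points of an open set land in $B_X(2r)$, so you cannot force a \emph{prescribed} element like $h_2=h_1^{-1}t_0$ to lie there. But at precisely the point you label ``the technical heart of the proof'' your argument stops; what remains is speculation, and the specific suggestions you make (a two-variable density argument on $F^{-1}(t_0)$, or finding a $k$-component with dense $G^+(k)$-points) do not lead anywhere obvious, since $F^{-1}(t_0)$ need not have any $G^+(k)$-points at all.

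The missing idea is to abandon the goal of factoring a \emph{fixed} $t_0$ and instead to \emph{build} $t$ out of two elements of $B_X(2r)$ whose Bruhat-cell factors you can control. The key technical input is the $k$-rationality of the Bruhat decomposition: the big cell $Pn_0P$ is open and $k$-defined, and multiplication $U\times P\to Pn_0P$, $(u,p)\mapsto un_0p$, is a $k$-isomorphism of varieties, so a $k$-point of $Pn_0P$ has \emph{$k$-rational} Bruhat factors. Concretely, one first picks $g\in B_X(2r)\cap Pn_0P$ (possible by Eqn.~(\ref{eqn:k_dense})), writes $g=xn_0x'$ with $x,x'\in P(k)$, and conjugates by $x'\in G^+(k)$ to arrange $xn_0\in B_X(2r)$. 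Then, with this $x$ now fixed, one sets $C_1=\{n_0x_1 : x_1\in P,\ xn_0^2x_1\ \text{regular semisimple}\}$ and $O_1=U\cdot C_1$, a constructible dense subset of $G$; applying Eqn.~(\ref{eqn:k_dense}) again gives $g'\in B_X(2r)\cap O_1$, and writing $g'=u\,(n_0x_1)\,u^{-1}$, the $k$-rationality of the cell decomposition forces $u\in U(k)$, so conjugating gives $n_0x_1\in B_X(2r)$. Multiplying, $t:=(xn_0)(n_0x_1)=xn_0^2x_1$ is a regular semisimple element of $P(k)$ lying in $B_X(4r)$, exactly as you wanted. The rationality you were worried about is delivered not by analysing fibres of $f$, but by the $k$-isomorphism $U\times P\cong Pn_0P$, which lets you conjugate a $k$-point of $B_X(2r)$ by a \emph{$k$-rational} element back into $n_0P$ without leaving $B_X(2r)$.
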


\begin{proof}
 The big cell $Pn_0P$ is open, so by Eqn.~(\ref{eqn:k_dense}), we can choose $g\in B_X(2r)\cap Pn_0P$.  We can write $g= xn_0x'$ for some $x,x'\in P(k)$.  Since $B_X(2r)$ is conjugation-invariant, there is no harm in replacing $g$ with $(x')^{-1}gx'$, so we can assume that $x'= 1$ and $g= xn_0$.  Let $C_1= \{n_0x_1\,|\,x_1\in P, xn_0^2x_1\ \mbox{is regular semisimple}\}$.  Let $O_1= P\cdot C_1= U\cdot C_1$; then $O_1$ is a constructible dense subset of $G$.
   By Eqn.~(\ref{eqn:k_dense}), there exists $g\in B_X(2r)\cap O_1$.  We can write $g= un_0x_1u^{-1}$ where $xn_0^2x_1$ is regular semisimple and $u\in U$.  Since $g\in G(k)$, both $u$ and $x_1u^{-1}$ belong to $G(k)$.  Hence $n_0x_1\in B_X(2r)\cap C_1$.  It follows that $t:= xn_0^2x_1$ is regular semisimple and belongs to $B_X(4r)$.  We have $t\in B_X(4r)\cap P(k)$, so $U(k)\subseteq B_t(2)\subseteq B_X(8r)$ by Lemma~\ref{lem:reg_ss}.  This completes the proof.
\end{proof}

\begin{proof}[Proof of Theorem~\ref{thm:bdd_unifbdd}]
 Suppose $G(k)= G^+(k)$.   By Lemma~\ref{lem:reg_ss_par}, there exists $t\in P(k)$ such that $t$ is regular semisimple.  By Lemma~\ref{lem:reg_ss}, $B_t(2)$ contains $U(k)$.  Since $G(k)$ is generated by the $G(k)$-conjugates of $U(k)$, we deduce that $\{t\}$ normally generates $G(k)$.  Hence $G(k)$ is finitely normally generated.
 
 Now suppose further that $G(k)$ is bounded.  Fix a finite normal generating set $Y$ for $G(k)$.  Then $G(k)= B_Y(s)$ for some $s\in {\mathbb N}$ and $Y\subseteq B_{U(k)}(d)$ for some $d\in {\mathbb N}$.  Let $X$ be any finite normal generating set for $G(k)$.  Then $X$ is normally generates $G$ by Lemma~\ref{lem:gengen}.  By Proposition~\ref{prop:gettingU}, $U(k)\subseteq B_X(8r)$.  So
 $$ G(k)= B_Y(s)\subseteq B_{U(k)}(sd)\subseteq B_X(8rsd). $$
 This shows that $G(k)$ is uniformly bounded, as required.
\end{proof}

\begin{remark}
 The hypothesis that $G^+(k)= G(k)$ holds in many cases if $G$ is $k$-simple and simply connected---this is the content of the Kneser-Tits conjecture, which holds, for example, when $k$ is a local field.
\end{remark}

\begin{example}
 It is well known that the abelianisation of $\SO_3({\mathbb Q})$ is ${\mathbb Q}^*/({\mathbb Q}^*)^2$, which is an infinitely generated abelian group.  It follows that $\SO_3({\mathbb Q})$ is not finitely normally generated.  Note that $\SO_3^+({\mathbb Q})= 1$ since $\SO_3$ is anisotropic over ${\mathbb Q}$.
\end{example}

\section{The split case}
\label{sec:split}

In this section we assume $G$ is split over $k$.
If $G$ is simply connected then the Kneser-Tits Conjecture holds for $G$, so $G^+(k)= G(k)$ in this case.

\begin{lemma}
\label{lem:max_tor}
 Suppose $(*)$ each $G_\alpha$ is isomorphic to $\SL_2$.  Let $t_i\in T_{\alpha_i}(k)$ for $1\leq i\leq r$ and set $t= t_1\cdots t_r$.  There exist $u_i, w_i\in U_{\alpha_i}(k)$ and $v_i, x_i\in U_{-\alpha_i}(k)$ for $1\leq i\leq r$ such that $t= x_r\cdots x_1u_r\cdots u_1v_1\cdots v_rw_1\cdots w_r$.
\end{lemma}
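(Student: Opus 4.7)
My plan is to argue by induction on $r$. The base case $r=1$ is a direct $\SL_2$-calculation: under the isomorphism $G_{\alpha_1}\cong\SL_2$ the element $t_1\in T_{\alpha_1}(k)$ corresponds to a diagonal matrix with entries $a,a^{-1}$ for some $a\in k^{*}$, and matrix multiplication verifies that
\[
\begin{pmatrix} a & 0 \\ 0 & a^{-1}\end{pmatrix}
=\begin{pmatrix} 1 & 0 \\ (a-1)/a^2 & 1\end{pmatrix}
\begin{pmatrix} 1 & -a \\ 0 & 1\end{pmatrix}
\begin{pmatrix} 1 & 0 \\ (1-a)/a & 1\end{pmatrix}
\begin{pmatrix} 1 & 1 \\ 0 & 1\end{pmatrix}
\]
(with all four factors trivial when $a=1$). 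Interpreting the four factors as elements of $U_{-\alpha_1}(k), U_{\alpha_1}(k), U_{-\alpha_1}(k), U_{\alpha_1}(k)$ in turn gives the required form $t_1 = x_1 u_1 v_1 w_1$.

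For the inductive step, assume the result for $r-1$ and set $s := t_1\cdots t_{r-1}\in T(k)$. Apply the base case inside $G_{\alpha_r}$ to obtain a decomposition $t_r = \tilde x_r\tilde u_r v_r w_r$ with $\tilde x_r, v_r \in U_{-\alpha_r}(k)$ and $\tilde u_r, w_r \in U_{\alpha_r}(k)$. The key observation is that $T$ normalises each $U_{\pm\alpha_r}$, so $x_r := s\tilde x_r s^{-1}\in U_{-\alpha_r}(k)$ and $u_r := s\tilde u_r s^{-1}\in U_{\alpha_r}(k)$ are well defined, and
\[
t \;=\; s\,t_r \;=\; (s\tilde x_r s^{-1})(s\tilde u_r s^{-1})\,s\,v_r\,w_r \;=\; x_r\,u_r\,s\,v_r\,w_r.
\]
Substituting the inductive decomposition of $s$ yields
\[
t \;=\; x_r\,u_r\,\bigl(x_{r-1}\cdots x_1\,u_{r-1}\cdots u_1\,v_1\cdots v_{r-1}\,w_1\cdots w_{r-1}\bigr)\,v_r\,w_r.
\]

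Finally, invoke the commutation relation already recalled in the paper: $U_{\alpha_i}$ and $U_{-\alpha_j}$ commute whenever $i\neq j$. Since $u_r\in U_{\alpha_r}$ commutes with each $x_j\in U_{-\alpha_j}$ for $j<r$, we slide $u_r$ rightward past $x_{r-1}\cdots x_1$; symmetrically $v_r\in U_{-\alpha_r}$ commutes with each $w_j\in U_{\alpha_j}$ for $j<r$, so we slide $v_r$ leftward past $w_{r-1}\cdots w_1$. The resulting expression is exactly $x_r\cdots x_1\,u_r\cdots u_1\,v_1\cdots v_r\,w_1\cdots w_r$, as required. The main subtlety I anticipate is precisely the conjugation step: without first conjugating the two inner $\SL_2$-factors by $s$, any naive rearrangement would eventually force one to commute $U_{\alpha_r}$ past $U_{\alpha_j}$ or $U_{-\alpha_r}$ past $U_{-\alpha_j}$, and the Chevalley commutator formula would then produce stray factors in $U_{\pm(\alpha_r+\alpha_j)}$ that lie outside the simple-root subgroups and are therefore incompatible with the target form.
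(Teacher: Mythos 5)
Your proof is correct and follows the same route as the paper: induction on $r$, an explicit $\SL_2$ matrix identity for the base case $r=1$, and then conjugating the two leftmost factors of the $G_{\alpha_r}$-decomposition by $s=t_1\cdots t_{r-1}$ so that the only commutations needed afterward are between $U_{\alpha_i}$ and $U_{-\alpha_j}$ with $i\neq j$, exactly as in the paper. One harmless slip: the parenthetical ``with all four factors trivial when $a=1$'' is inaccurate --- with $a=1$ the two unipotent upper-triangular factors $\bigl(\begin{smallmatrix}1&-1\\0&1\end{smallmatrix}\bigr)$ and $\bigl(\begin{smallmatrix}1&1\\0&1\end{smallmatrix}\bigr)$ are nontrivial, but their product is the identity, so the base case still holds.
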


\begin{proof}
 We use induction on $r$.  The case $r=0$ is vacuous.  Now consider the case $r=1$.  Then $G\cong \SL_2$.  For any $a,b,c,d\in k$ we have
 $$ \left(\begin{smallmatrix} 1 & a \\ 0 & 1 \end{smallmatrix} \right)\left(\begin{smallmatrix} 1 & 0 \\ b & 1 \end{smallmatrix} \right) \left(\begin{smallmatrix} 1 & c \\ 0 & 1 \end{smallmatrix} \right)\left(\begin{smallmatrix} 1 & 0 \\ d & 1 \end{smallmatrix} \right)= \left(\begin{smallmatrix} 1+ab & a \\ b & 1 \end{smallmatrix} \right)\left(\begin{smallmatrix} 1+cd & c \\ d & 1 \end{smallmatrix} \right)= \left(\begin{smallmatrix} 1+ab+cd+abcd+ad & c+abc+a \\ b+bcd+d & bc+1 \end{smallmatrix} \right). $$
 Let $x\in k^*$.  Set $a= -x$, $b= x^{-1}-1$, $c= 1$ and $d= x-1$; then the matrix above becomes $\left(\begin{smallmatrix} x & 0 \\ 0 & x^{-1} \end{smallmatrix} \right)$.  Hence the result holds when $r=1$.
 
 Now suppose $r>1$.  Let $H$ be the semisimple group with root system spanned by $\pm \alpha_1,\ldots, \pm \alpha_{r-1}$.  Clearly condition $(*)$ holds for $H$.  Let $s= t_1\cdots t_{r-1}$.  By our induction hypothesis, there exist $u_i, w_i\in U_{\alpha_i}(k)$ and $v_i, x_i\in U_{-\alpha_i}(k)$ for $1\leq i\leq r-1$ such that
 $$ s= x_{r-1}\cdots x_1u_{r-1}\cdots u_1v_1\cdots v_{r-1}w_1\cdots w_{r-1}. $$
 By the $\SL_2$ case considered above, $t_r= x_r'u_r'v_r'w_r'$ for some $u_r, w_r\in U_{\alpha_r}$ and some $v_r,x_r\in U_{-\alpha_r}$.  Set $x_r= sx_r's^{-1}$, $u_r= su_r's^{-1}$, $v_r= v_r'$ and $w_r= w_r'$.  We have
 \begin{eqnarray*}
  & & x_rx_{r-1}\cdots x_1u_ru_{r-1}\cdots u_1v_1\cdots v_{r-1}v_rw_1\cdots w_{r-1}w_r \\
   & = & x_ru_rx_{r-1}\cdots x_1u_{r-1}\cdots u_1v_1\cdots v_{r-1}w_1\cdots w_{r-1}v_rw_r \\
   & = & x_ru_rsv_rw_r \\
   & = & sx_r'u_r'v_r'w_r' \\
   & = & st_r= t.
 \end{eqnarray*}
 The result follows by induction.
\end{proof}

\begin{proposition}
\label{prop:RuB}
 Suppose $G$ is simply connected.  Let $X\subseteq G^+(k)$ such that $U(k)\subseteq X$.  Then $B_X(7)= G^+(k)$. 
\end{proposition}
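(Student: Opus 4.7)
The plan is to write a general $g\in G^+(k)$ via two opposite open cells, in such a way that the decomposition involves only a single torus factor. Since $G$ is split and simply connected, the Kneser-Tits conjecture gives $G^+(k)=G(k)$, and the hypothesis $(*)$ of Lemma~\ref{lem:max_tor} holds automatically. The inclusion $U(k)\subseteq X$ gives $U(k)\subseteq B_X(1)$ immediately; conjugation-invariance of $\|\cdot\|_X$ together with $n_0\in N_G(T)(k)$ then gives $U^-(k)=n_0U(k)n_0^{-1}\subseteq B_X(1)$. Any $t\in T(k)$ can be written as $t_1\cdots t_r$ with $t_i\in T_{\alpha_i}(k)$, so Lemma~\ref{lem:max_tor} presents $t$ as a product of four blocks, each lying in a single copy of $U^-(k)$ or $U(k)$; hence $T(k)\subseteq B_X(4)$.

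Next I would apply Lemma~\ref{lem:open_prod} to two opposite big cells. The cell $U^-TU=U^-B$ is open (and its $k$-points factor as $U^-(k)T(k)U(k)$) by the excerpt, while $UTU^-=n_0(U^-TU)n_0^{-1}$ is a translate of it and therefore also open; both are dense in $G$. The same argument of the excerpt applied to the opposite Borel $TU^-$ shows that multiplication $U\times T\times U^-\to UTU^-$ is a $k$-isomorphism of varieties, so $(UTU^-)(k)=U(k)T(k)U^-(k)$. Lemma~\ref{lem:open_prod} then provides a factorisation $g=g_1g_2$ with
\[
g_1=u_1^-t_1u_1\in U^-(k)T(k)U(k),\qquad g_2=u_2t_2v_2^-\in U(k)T(k)U^-(k).
\]

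The key manoeuvre is that the two torus factors $t_1,t_2$ in $g_1g_2=u_1^-t_1u_1u_2t_2v_2^-$ are separated only by the unipotent element $u_1u_2\in U(k)$, which can be absorbed. Using that $T$ normalises $U$, I would rewrite $t_1(u_1u_2)t_2=(t_1t_2)(t_2^{-1}u_1u_2t_2)=(t_1t_2)u'$ with $u'\in U(k)$, so that $g=u_1^-(t_1t_2)u'v_2^-$ is a product of four factors whose $X$-norms are bounded by $1$, $4$, $1$, $1$ respectively. The main obstacle I see is precisely this choice of two \emph{opposite} cells: factoring $g$ through $U^-TU$ twice would cost $6+6=12$, whereas arranging the cells so that the two torus pieces end up adjacent (modulo a single unipotent) lets them collapse into one element of $T(k)$ and gives the sharp bound $\|g\|_X\leq 1+4+1+1=7$.
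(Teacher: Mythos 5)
Your proof is correct and takes essentially the same route as the paper: bound $T(k)$ by $B_X(4)$ via Lemma~\ref{lem:max_tor}, use Lemma~\ref{lem:open_prod} on a pair of opposite big cells, and then exploit the fact that $T$ normalises a unipotent subgroup to merge the two torus factors into one, landing on the $1+4+1+1=7$ count. The paper writes $g\in (BU^-)(k)\cdot(U^-B)(k)$ so that the two $U^-$-factors become adjacent, whereas you write $g\in (U^-B)(k)\cdot(BU^-)(k)$ so that the two $U$-factors become adjacent; this is a mirror-image of the same manipulation and the accounting is identical.
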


\begin{proof}
 Since $G$ is simply connected, $(*)$ holds for $G$ and the map $\psi\colon {\mathbb G}_m^r\to T$ given by $\psi(a_1,\ldots, a_r)= \alpha_1^\vee(a_1)\cdots \alpha_r^\vee(a_r)$ is a $k$-isomorphism.
 It follows that $T(k)= T_{\alpha_1}(k)\cdots T_{\alpha_r}(k)$, so $T(k)\subseteq B_X(4)$ by Lemma~\ref{lem:max_tor}.  Hence $U^-(k)B(k)= U^-(k)T(k)U(k)\subseteq B_X(1)B_X(4)B_X(1)\subseteq B_X(6)$.  Now $G(k)= (U^-B)^{-1}(k)(U^-B)(k)$ by Lemma~\ref{lem:open_prod}.  But
 $$ (U^-B)^{-1}(k)(U^-B)(k)= B(k)U^-(k)U^-(k)B(k)= U(k)T(k)U^-(k)T(k)U(k) $$
 $$ = U(k)U^-(k)T(k)U(k)= U(k)U^-(k)B(k)\subseteq B_X(1)B_X(6)\subseteq B_X(7), $$
 so we are done.
\end{proof}

\begin{proof}[Proof of Theorem~\ref{thm:split}]
 Let $\widetilde{G}$ be the split form of the simply connected cover of $G$ and let $\psi\colon \widetilde{G}\to G$ be the canonical projection.  Then $\psi$ is a $k$-defined central isogeny, so by \cite[V.22.6\ Thm.]{MR1102012}, the map $\widetilde{B}\mapsto \psi(\widetilde{B})$ gives a bijection between the set of $k$-Borel subgroups of $\widetilde{G}$ and the set of $k$-Borel subgroups of $G$; moreover, for each $\widetilde{B}$, $\psi$ gives rise to a $k$-isomorphism from $R_u(\widetilde{B})$ to $R_u(B)$ \cite[Prop.\ V.22.4]{MR1102012}.
 It follows that $\psi(\widetilde{G}^+(k))= G^+(k)$.  By \cite[Lem.\ 2.16]{KLM1} we have $\Delta(G^+(k))\leq \Delta(\widetilde{G}^+(k))$, so we can assume without loss that $G$ is simply connected.  In particular, $G^+(k)= G(k)$.
 
 Let $X$ be a finite normal generating set for $G(k)$.  Then $X$ is a finite normal generating set for $G$ (Lemma~\ref{lem:gengen}), so by Eqn.\ (\ref{eqn:k_dense}) there exists $t\in B_X(2r)$ such that $t$ is regular semisimple.  We have $U(k)\subseteq B_t(2)$ by Lemma~\ref{lem:reg_ss} and $G(k)\subseteq B_{U(k)}(7)$ by Proposition~\ref{prop:RuB}.  So
 $$ G(k)\subseteq B_{U(k)}(7)\subseteq B_t(14)\subseteq B_X(28r). $$
This shows that $\Delta(G(k))\leq 28r$, as required.
\end{proof}

\begin{example}
\label{ex:lowerbd}
 (a) Let $G= \SL_n(k)$ where $n\geq 3$, let $g$ be the elementary matrix $E_{1n}(1)$ and let $X= G(k)\cdot g$.
By \cite[Prop.\ 6.23]{KLM1}, $X$ generates $G(k)$.  One sees easily by direct computation that the centraliser $C_G(g)$ has dimension $n^2- 2n+ 1$, so $\dim(G\cdot g)= 2n-2$.  A simple dimension-counting argument shows that if $t< \frac{1}{2}\rank(G)$ then $\overline{X^t}$ is a proper closed subvariety of $G$.  Since $G(k)$ is dense in $G$, it follows that $\overline{X^t}$ does not contain $G(k)$, so $X(k)^t$ does not contain $G(k)$.  We deduce that $\Delta(G(k))\geq \frac{1}{2}\rank(G)$.

 (b) The bounds in Theorems~\ref{thm:algclosed} and \ref{thm:split} are far from sharp.  Aseeri has shown by direct calculation that $3\leq \Delta(\SL_3({\mathbb C}))\leq 6$ and that $\Delta(\SL_2({\mathbb C})^m)= 3m$ and $\Delta(\PGL_2({\mathbb C})^m)= 2m$ for every $k\in {\mathbb N}$ \cite[Thm.\ 8.0.2, Thm.\ 7.2.10, Thm.\ 7.2.6]{aseeri}, whereas Theorem~\ref{thm:algclosed} yields the bounds $\Delta(\SL_3({\mathbb C}))\leq 8$ and $\Delta(\SL_2({\mathbb C})^m), \Delta(\PGL_2({\mathbb C})^m)\leq 4m$.  Aseeri also showed that $3\leq \Delta(\SL_3({\mathbb R}))\leq 4$, whereas Theorem~\ref{thm:split} gives $\Delta(\SL_3({\mathbb R}))\leq 56$.
\end{example}

\section{Semisimple Lie groups}
\label{sec:Lie}

\begin{proof}[Proof of Theorems~\ref{thm:real_Lie} and \ref{thm:complex_Lie}]
 Let $H$ be a linear semisimple Lie group such that $H$ has no compact simple factors.  By \cite[Thm.\ III.2.13]{milne}, there is a complex semisimple algebraic group $G$ defined over ${\mathbb R}$ such that $G^+({\mathbb R})= H$.  Now $Z(H)$ is finite, so $H$ is finitely normally generated and bounded by \cite[Thm.~1.2]{KLM1}.  It follows from Theorem~\ref{thm:bdd_unifbdd} that $H$ is uniformly bounded.  If $H$ is split then $G$ is split over ${\mathbb R}$, so $\Delta(H)\leq 28\rank(H)$ by Theorem~\ref{thm:split}.
 
 The argument for the complex case is similar: if $H$ is a semisimple linear complex Lie group then there is a semisimple complex algebraic group $G$ such that the complex Lie group associated to $G$ is $H$ (cf.\ \cite[Ch.\ 4, Sec.\ 2, Problem 12]{MR1064110}, and $G$ is isomorphic to $H$.  The result now follows from Theorem~\ref{thm:algclosed}.
\end{proof}

\bibliography{bibliography}

\def\polhk#1{\setbox0=\hbox{#1}{\ooalign{\hidewidth
  \lower1.5ex\hbox{`}\hidewidth\crcr\unhbox0}}}
  \def\polhk#1{\setbox0=\hbox{#1}{\ooalign{\hidewidth
  \lower1.5ex\hbox{`}\hidewidth\crcr\unhbox0}}}
  \def\polhk#1{\setbox0=\hbox{#1}{\ooalign{\hidewidth
  \lower1.5ex\hbox{`}\hidewidth\crcr\unhbox0}}}
  \def\polhk#1{\setbox0=\hbox{#1}{\ooalign{\hidewidth
  \lower1.5ex\hbox{`}\hidewidth\crcr\unhbox0}}}
  \def\polhk#1{\setbox0=\hbox{#1}{\ooalign{\hidewidth
  \lower1.5ex\hbox{`}\hidewidth\crcr\unhbox0}}}
  \def\polhk#1{\setbox0=\hbox{#1}{\ooalign{\hidewidth
  \lower1.5ex\hbox{`}\hidewidth\crcr\unhbox0}}} \def\cprime{$'$}
\begin{thebibliography}{1}

\bibitem{aseeri}
Fawaz Aseeri.
\newblock Uniform boundedness of groups.
\newblock PhD thesis, University of Aberdeen, 2022.

\bibitem{MR1102012}
Armand Borel.
\newblock {\em Linear algebraic groups}, volume 126 of {\em Graduate Texts in
  Mathematics}.
\newblock Springer-Verlag, New York, second edition, 1991.

\bibitem{MR2125071}
Robert~M. Guralnick and Gunter Malle.
\newblock Classification of {$2F$}-modules. {II}.
\newblock In {\em Finite groups 2003}, pages 117--183. Walter de Gruyter,
  Berlin, 2004.

\bibitem{MR0396773}
James~E. Humphreys.
\newblock {\em Linear algebraic groups}.
\newblock Springer-Verlag, New York-Heidelberg, 1975.
\newblock Graduate Texts in Mathematics, No. 21.

\bibitem{KLM1}
Jarek K{\k e}dra, Assaf Libman, and Ben Martin.
\newblock On boundedness properties of groups.
\newblock {\em J.\ Topol.\ Anal.}, 2021.
\newblock DOI: 10.1142/S1793525321500497.

\bibitem{milne}
James~S. Milne.
\newblock Lie algebras, algebraic groups, and {L}ie groups.
\newblock Version 2.00, https://www.jmilne.org/math/CourseNotes/LAG.pdf, 2013.

\bibitem{MR546290}
P.~E. Newstead.
\newblock {\em Introduction to moduli problems and orbit spaces}, volume~51 of
  {\em Tata Institute of Fundamental Research Lectures on Mathematics and
  Physics}.
\newblock Tata Institute of Fundamental Research, Bombay; by the Narosa
  Publishing House, New Delhi, 1978.

\bibitem{MR1064110}
A.~L. Onishchik and \`E.~B. Vinberg.
\newblock {\em Lie groups and algebraic groups}.
\newblock Springer Series in Soviet Mathematics. Springer-Verlag, Berlin, 1990.
\newblock Translated from the Russian and with a preface by D. A. Leites.

\bibitem{MR2458469}
T.~A. Springer.
\newblock {\em Linear algebraic groups}.
\newblock Modern Birkh\"auser Classics. Birkh\"auser Boston, Inc., Boston, MA,
  second edition, 2009.

\end{thebibliography}
\bibliographystyle{plain}

\end{document}